\newcommand{\genlegendre}[4]{%
	\genfrac{(}{)}{}{#1}{#3}{#4}%
	\if\relax\detokenize{#2}\relax\else_{\!#2}\fi
}
\newcommand{\legendre}[3][]{\genlegendre{}{#1}{#2}{#3}}
\newcommand{\CT}[2]{\langle #1,#2\rangle_{CT}}
\newtheorem{theorem}{Theorem}[section]
\newtheorem{lemma}[theorem]{Lemma}
\newtheorem{corollary}[theorem]{Corollary}
\newtheorem{conjecture}{Conjecture}
\theoremstyle{definition}
\newtheorem{definition}[theorem]{Definition}
\newtheorem{proposition}[theorem]{Proposition}
\newtheorem*{question}{Question}
\newtheorem{example}[theorem]{Example}
\theoremstyle{remark}
\newtheorem{remark}[theorem]{Remark}
\newcommand{\longcomment}[1]{}
\DeclareMathOperator{\ord}{ord}
\DeclareMathOperator{\Sel}{Sel^{(2)}}
\DeclareMathOperator{\rank}{rank}
\DeclareMathOperator{\loc}{\textrm{loc}}
\DeclareSymbolFont{cyrletters}{OT2}{wncyr}{m}{n}
\DeclareMathSymbol{\Sha}{\mathalpha}{cyrletters}{"58}
\DeclareMathOperator{\Hom}{Hom}
\DeclareMathOperator{\Gal}{Gal}
\DeclareMathOperator{\GalQ}{Gal(\overline{\mathbb{Q}}/\mathbb{Q})}
\newcommand{\T}{\tilde{T}}
\newcommand{\whichbold}[1]{\mathbb{#1}} 
\newcommand{\ZZ}{\whichbold{Z}}
\newcommand{\FF}{\whichbold{F}}
\newcommand{\RR}{\whichbold{R}}
\newcommand{\QQ}{\mathbb{Q}}
\newcommand{\Fp}{\whichbold{F}_{p}}
\subjclass[2010]{14H52,   11G05}
\keywords{elliptic curves, quadratic twists, Selmer groups}
\numberwithin{equation}{section}
\author{Matija Kazalicki}
\address{Department of Mathematics\\ 
	University of Zagreb\\
	 Bijeni\v{c}ka cesta 30\\
	  10000 Zagreb\\
	  Croatia}
\email{matija.kazalicki@math.hr}
\title[Quadratic twists of genus one curves and Diophantine quintuples]{Quadratic twists of genus one curves and Diophantine quintuples}
\renewcommand{\tocsection}[3]{%
	\indentlabel{\@ifnotempty{#2}{\bfseries\ignorespaces#1 #2\quad}}\bfseries#3}
\renewcommand{\tocsubsection}[3]{%
	\indentlabel{\@ifnotempty{#2}{\ignorespaces#1 #2\quad}}#3}
\newcommand\@dotsep{4.5}
\def\@tocline#1#2#3#4#5#6#7{\relax
	\ifnum #1>\c@tocdepth 
	\else
	\par \addpenalty\@secpenalty\addvspace{#2}%
	\begingroup \hyphenpenalty\@M
	\@ifempty{#4}{%
		\@tempdima\csname r@tocindent\number#1\endcsname\relax
	}{%
		\@tempdima#4\relax
	}%
	\parindent\z@ \leftskip#3\relax \advance\leftskip\@tempdima\relax
	\rightskip\@pnumwidth plus1em \parfillskip-\@pnumwidth
	#5\leavevmode\hskip-\@tempdima{#6}\nobreak
	\leaders\hbox{$\m@th\mkern \@dotsep mu\hbox{.}\mkern \@dotsep mu$}\hfill
	\nobreak
	\hbox to\@pnumwidth{\@tocpagenum{\ifnum#1=1\bfseries\fi#7}}\par
	\nobreak
	\endgroup
	\fi}
\renewcommand\csname r@tocindent0\endcsname{0pt}
\def\l@subsection{\@tocline{2}{0pt}{2.5pc}{5pc}{}}
\begin{document}
	\maketitle
	
\begin{abstract}
Motivated by the theory of Diophantine $m$-tuples, we study rational points on quadratic twists $H^d:d y^2=(x^2+6x-18)(-x^2+2x+2)$, where $|d|$ is a prime.
If we denote by $S(X)=\{ d \in \ZZ:  H^d(\QQ)\ne \emptyset, |d| \textrm{ is a prime}\textrm{ and } |d| < X\},$ then, by assuming some standard conjectures about the ranks of elliptic curves in the family of quadratic twists, we prove that as $X \rightarrow \infty$ 
$$\frac{43}{256}+o(1)\le \frac{\#S(X)}{2\pi(X)}\le \frac{46}{256}+o(1).$$

\end{abstract}	

\section{Introduction}
For an integer $d$, a set of $m$ distinct nonzero rational numbers
with the property that the product of any two of its distinct
elements plus $d$ is a square is called a rational Diophantine $m$-tuple with the property $D(d)$ or $D(d)$-$m$-tuple. The $D(1)$-$m$-tuples (with rational elements) are called simply rational Diophantine $m$-tuples and have been studied since ancient times, starting with Diophantus, Fermat, and Euler.

It is not known how large can a rational Diophantine tuple be.
Dujella, Kazalicki, Miki\'c, and Szikszai \cite{Dujella_Kazalicki_Mikic_Szikszai} proved that there are infinitely many rational Diophantine sextuples, while no example of a rational Diophantine septuple is known. Also, no example of rational $D(d)$-sextuple is known if $d$ is not a perfect square. For more information on Diophantine $m$-tuples see the survey article \cite{Dujella_Notices}.

We are interested in the following question.

\begin{question}
	Does there exist a rational $D(d)$-quintuple for every $d\in \ZZ$?
\end{question}

Dujella and Fuchs \cite{Dujella_Fuchs} proved that there are infinitely many squarefree integers $d$'s for which there are infinitely many rational $D(d)$-quintuples, and Dra\v zi\'c \cite{Drazic} (improving the similar result from \cite{Dujella_Fuchs}) proved, assuming the Parity conjecture for the quadratic twists of several explicitly given elliptic curves, that for at least $99.5\%$ of squarefree integers $d$ there are infinitely many rational $D(d)$-quintuples.

Following an idea from \cite{Dujella_Fuchs}, we start with a $D(\frac{16}{9}x^2(x^2-x-3)(x^2+2x-12))$-quintuple in $\ZZ[x]$ 
	\footnotesize\begin{equation*}
	{  \left\{\frac{1}{3}(x^2+6x-18)(-x^2+2x+2),\frac{1}{3} x^2(x+5)(-x+3),(x-2)(5x+6),\frac{1}{3}(x^2+4x-6)(-x^2+4x+6),4x^2\right\}}
\end{equation*}\normalsize
found by Dujella \cite{Dujella_Fib} (and used to prove that there are infinitely many $D(-1)$-quintuples in \cite{Dujella_Fib_2}).
Note that for rational $u\ne 0$, if $\{a,b,c,d,e\}$ is $D(qu^2)$-quintuple, then $\{\frac{a}{u},\frac{b}{u},\frac{c}{u},\frac{d}{u},\frac{e}{u}\}$ is $D(q)$-quintuple.
In particular, for squarefree integer $d$, if $$d y^2=(x^2-x-3)(x^2+2x-12)$$ for some $x,y\in \QQ$ then by dividing the elements of quintuple above with $\frac{4}{3}x y$ we obtain $D(d)$-quintuple. Thus, if the equation above has infinitely many solution, we may conclude that there are infinitely many $D(d)$-quintuples.

Consider the genus one quartic 
$$H:\quad y^2=(x^2-x-3)(x^2+2x-12).$$
For a squarefree integer $d$, we denote by $H^d: d y^2=f(x)$ the quadratic twist of $H$ with respect to $\QQ(\sqrt{d})$. Quartic $H$, as a (singular) genus one curve with a rational point at infinity, is birationally equivalent to the elliptic curve $E/\QQ$
 $$E: y^2=(x-9)(x-8)(x+18).$$
 Likewise, we denote by $E^d$ the quadratic twist of $E$ by $\QQ(\sqrt{d})$. Thus $H^d(\QQ) \ne \emptyset$ implies that $H^d$ is birationally equivalent to $E^d$. Since, by Proposition \ref{prop:infinity}, $H^d(\QQ)\ne \emptyset$ implies that $H^d(\QQ)$ is infinite and consequently that there are infinitely many $D(d)$-quintuples, we are led to the study of squarefree integers $d$ for which $H^d(\QQ)\ne \emptyset$.

 In this paper we will focus on twists by $\QQ(\sqrt{d})$ where $|d|$ is prime. Let
$$S=\{ d \in \ZZ:  H^d(\QQ)\ne \emptyset \textrm{ and } |d| \textrm{ is a prime}\}.$$ 
\begin{question}
	 What is asymptotically the size of set $S(X)=\{d \in S: |d|< X\}$ as $X \rightarrow \infty$?
\end{question}

Surprisingly, and in contrast with the analogous problem for the quadratic twists of elliptic curves, not much is known about this question.

\c{C}iperiani and Ozman gave a criterion for the set of rational points of the quadratic twist of quartic to be non-empty in terms of the image of the global trace map $tr_{\QQ(\sqrt{d})/\QQ}$ on an elliptic curve (see Section 2 of \cite{Ciperiani_Ozman}), but in general, no estimates for the size of set $S(X)$ are known.

For a squarefree $d$, the quartic $H^d$, as a $2$-covering of $E^d$, represents an element of $\Sel(E^d)$, the $2$-Selmer group of $E^d$, provided that $H^d$ is everywhere locally solvable (i.e. $H^d(\QQ_v)\ne \emptyset$ for all places $v$ -- we write ELS for short). For the interpretation of Selmer group elements as $2$-covers of $E$ see Section 1.2 of \cite{Stoll_descent}. 

If $|d|=p$ is a prime, then Proposition \ref{prop:ELS} implies that $H^d$ is ELS if and only if $\legendre{p}{13}=1$ or $p=13$. Thus, for such $d$, $H^d(\QQ) = \emptyset$ if and only if $H^d$ represents a nontrivial element in $\Sha(E^d)[2]$ (where $\Sha(E^d)$ denotes the Tate-Shafarevich group of $E^d$), or more precisely,  if and only if the image of $H^d$ under the map $\iota:\Sel(E^d) \rightarrow \Sha(E^d)[2]$ from the exact sequence
\begin{equation}\label{eq:exact}
0\longrightarrow E^d(\QQ)/2E^d(\QQ) \longrightarrow \Sel(E^d) \longrightarrow \Sha(E^d)[2]\longrightarrow 0
\end{equation}
is nonzero. In this case, we say that $H^d$ represents the element of order two in $\Sha(E^d)$.

Our main tool for studying the image of $H^d$ in $ \Sha(E^d)[2]$  is the Cassels-Tate pairing on $\Sha(E^d)$ with values in $\QQ / \ZZ$, or more precisely, its extension to a pairing on  $2$-Selmer group by \eqref{eq:exact}  $$\langle \cdot, \cdot \rangle_{CT}:  \Sel(E^d) \times  \Sel(E^d) \rightarrow \ZZ/2\ZZ=\{0,1\}.$$

This pairing is bilinear, alternating, and non-degenerate on $\Sha(E^d)[2]/2\Sha(E^d)[4]$, or equivalently, on $\Sel(E^d)/2\,\textrm{Sel}^{(4)}(E^d)$ (see Section \ref{sec:governing}). In particular, $\dim_{\FF_2}\Sha(E^d)[2]/2\Sha(E^d)[4]$ is even, thus equal to $0$ or $2$ if $|d|$ is a prime (see Proposition \ref{prop:sha_trivial}). Thus, if we find a class $L\in \Sel(E^d)$ such that $\CT{H^d}{L}=1$, we can conclude that $\iota(H^d)\ne 0$, and, hence, that $H^d$ represents the element of order two in $\Sha(E^d)$. 
If $\Sha(E^d)[2]$ is nontrivial and $\Sha(E^d)[2] = 2\Sha(E^d)[4]$ (see Proposition \ref{prop:sha24}), then we {\bf can not} obtain any information about $H^d$ using this method.

For estimating the asymptotic behaviour of $\#S(X)$ as $X \rightarrow \infty$ we will assume the following ``standard'' conjectures.
\begin{conjecture}\label{conj:1}
	$100\%$ of quadratic twists $E^d$ where $|d|$ is a prime have rank $0$ or $1$.
\end{conjecture}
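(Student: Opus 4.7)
The plan is to reduce Conjecture \ref{conj:1} to a combination of the $2$-parity conjecture (known unconditionally for this family) and Smith's distributional theorem on $2^\infty$-Selmer groups in quadratic twist families (the ``$100\%$ of Goldfeld'' result), while flagging the genuine obstacle of restricting to prime moduli.

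First, since $E$ has full rational $2$-torsion (the roots $9,8,-18$ of the defining cubic all lie in $\QQ$), the $2$-parity conjecture is known for every quadratic twist $E^p$ by the work of Monsky (reproved by Dokchitser--Dokchitser): the parity of $\rank E^p(\QQ)$ is determined by the global root number $w(E^p)\in\{\pm 1\}$, which is computable explicitly via Rohrlich's local formulas. This expresses the parity of $\rank E^p$ in terms of a finite list of congruence and splitting conditions on $p$; Chebotarev density then gives each sign with density $\tfrac{1}{2}$ among primes.

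Next, for primes $p$ with $w(E^p)=+1$ one wants $100\%$ to have $\rank E^p = 0$, while for $w(E^p)=-1$ one wants $100\%$ to have $\rank E^p = 1$. Both would follow from controlling the $\ZZ_2$-corank of $\mathrm{Sel}_{2^\infty}(E^p)$. For elliptic curves with full rational $2$-torsion, Smith's theorem on $2^\infty$-Selmer groups in quadratic twist families asserts that $100\%$ of squarefree $d$ realise the minimal corank predicted by parity (one must invoke the version appropriate to $E$, which carries a rational cyclic $4$-isogeny since $9-8=1$ is a square). Since this corank upper-bounds $\rank E^p$, and a matching lower bound in the odd-parity case is supplied by Heegner-point constructions in the spirit of Gross--Zagier and Kolyvagin, this would close both cases.

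The main obstacle is that Smith's distributional results are established for \emph{squarefree} $d$ ordered by $|d|$, whereas the conjecture concerns the far thinner sub-family with $|d|$ prime (density zero among squarefree integers). Extracting the prime case from Smith's additive-combinatorial framework --- which crucially exploits the large number of prime divisors of a typical squarefree $d$ --- appears to demand substantial new input, perhaps combining his techniques with analytic tools such as Bombieri--Vinogradov estimates for quadratic characters or a large sieve adapted to the relevant Markov chain on Selmer structures. This synthesis is not currently available in the literature, which is precisely why the statement is posed as a conjecture here and assumed in what follows.
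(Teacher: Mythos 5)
This statement is a \emph{conjecture} in the paper, not a theorem: the author explicitly assumes it (together with the parity conjecture) and only remarks that it becomes a theorem, under BSD, when $d$ ranges over \emph{all} squarefree integers, by Smith's work. So there is no proof in the paper to compare against. Your proposal is, to its credit, honest about this: you sketch the natural strategy (root numbers plus Smith's $2^\infty$-Selmer distribution plus Gross--Zagier--Kolyvagin) and then correctly identify the genuine obstruction, namely that primes form a density-zero subfamily of the squarefree integers and Smith's method leans on the typical squarefree $d$ having many prime factors. That diagnosis matches the paper's framing exactly, and your closing admission that the synthesis is unavailable means you have not claimed more than can be delivered.

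Two factual points in your sketch need correction, though. First, you conflate the $2$-parity theorem with the parity conjecture for the Mordell--Weil rank: what Monsky and Dokchitser--Dokchitser prove unconditionally is that $(-1)^{\dim_{\FF_2}\Sel(E^d)}=w(E^d)$ (the paper uses exactly this in the proof of Proposition \ref{prop:sha_trivial}); the statement $(-1)^{\rank E^d}=w(E^d)$ is \emph{not} known and is precisely the paper's Conjecture \ref{conj:2}, assumed separately. Passing from Selmer parity to rank parity requires finiteness of $\Sha$, so your first paragraph overstates what is unconditional. Second, your parenthetical that $E$ ``carries a rational cyclic $4$-isogeny since $9-8=1$ is a square'' is wrong on both counts: the relevant criterion for a rational cyclic $4$-isogeny with kernel over $(e_1,0)$ is that $(e_1-e_2)(e_1-e_3)$ be a square, and for $E:y^2=(x-9)(x-8)(x+18)$ the three products are $27$, $-26$ and $702$, none of which is a square. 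So $E$ has no rational cyclic $4$-isogeny and the standard form of Smith's theorem applies in the squarefree-$d$ setting; this actually makes the (conditional, squarefree) version of the argument cleaner than you suggest, but it does nothing to bridge the prime-modulus gap, which remains the reason the statement is a conjecture.
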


Note that this conjecture is now a theorem under the BSD conjecture if we let $d$ range over all squarefree integers (see Smith \cite{Smith_22_dist, Smith_22_fixed}).

\begin{conjecture}[The parity conjecture]\label{conj:2}
	For all $d\in \ZZ$ where $|d|$ is prime, $$(-1)^{\rank(E^d)}= w(E^d),$$ where $w(E^d)$ is the root number of the elliptic curve $E^d$.
\end{conjecture}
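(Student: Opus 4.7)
The plan is to reduce Conjecture \ref{conj:2} to known $2$-parity results for elliptic curves admitting a rational $2$-isogeny. The curve $E:y^2=(x-9)(x-8)(x+18)$ has full rational $2$-torsion (with $x$-coordinates $9$, $8$, and $-18$); in particular, every quadratic twist $E^d$ inherits this $2$-torsion and admits a rational $2$-isogeny over $\QQ$ for every squarefree $d$.

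First, I would bridge the Mordell-Weil rank and the $2^\infty$-Selmer rank. Let $r_2(E^d)$ denote the $\ZZ_2$-corank of the $2^\infty$-Selmer group of $E^d$, which fits into the standard descent sequence
$$r_2(E^d)=\rank(E^d)+\mathrm{corank}_{\ZZ_2}\Sha(E^d)[2^\infty].$$
Cassels' alternating pairing on $\Sha(E^d)$ forces the divisible part of $\Sha(E^d)[2^\infty]$ to have even $\ZZ_2$-corank, so $r_2(E^d)\equiv \rank(E^d)\pmod{2}$. Thus it suffices to establish $(-1)^{r_2(E^d)}=w(E^d)$, i.e.\ the $2$-parity conjecture for $E^d$.

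Second, I would invoke the $2$-parity theorem of T.~and V.~Dokchitser, which establishes the $p=2$ parity conjecture for every elliptic curve over $\QQ$ admitting a rational $2$-isogeny. Since this hypothesis holds for every $E^d$ in our family, the theorem applies uniformly and yields Conjecture \ref{conj:2}; in fact it gives $(-1)^{\rank(E^d)}=w(E^d)$ for every squarefree $d$, not just for primes $|d|$.

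The hardest step, were a self-contained proof desired, would be to avoid invoking the Dokchitsers' machinery of regulator constants and instead derive the parity directly for this family. A feasible alternative is to compute $\dim_{\FF_2}\Sel(E^d)$ via an explicit isogeny descent (using the three $2$-isogenies coming from the rational $2$-torsion points) and, independently, the global root number $w(E^d)=\prod_v w_v(E^d)$ via Rohrlich's local formulas at the bad primes $v\in\{\infty,2,3,|d|\}$. By Proposition \ref{prop:ELS} both sides are then controlled by residue and quadratic-residue data of $d$ modulo a small fixed integer, reducing the conjecture for $|d|$ prime to a finite verification matching the two sides.
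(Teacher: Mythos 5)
The paper does not prove this statement: it is stated and used as a standing \emph{conjecture} (it is the classical parity conjecture, which remains open), so there is no proof in the paper to compare yours against. Your proposal, if it worked, would establish the parity conjecture unconditionally for every elliptic curve over $\QQ$ with a rational $2$-isogeny, which is not a known theorem; the error is localized in your first step.

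Your second step is fine: the $2$-parity theorem of T.~and V.~Dokchitser does give $(-1)^{r_2(E^d)}=w(E^d)$ for the $\ZZ_2$-corank $r_2(E^d)$ of the $2^\infty$-Selmer group, and $E^d$ has full rational $2$-torsion so the isogeny hypothesis is satisfied. The gap is the passage from $r_2(E^d)$ to $\rank(E^d)$. Writing $\Sha(E^d)[2^\infty]\cong(\QQ_2/\ZZ_2)^{\delta}\oplus(\text{finite})$, one has $r_2(E^d)=\rank(E^d)+\delta$, and you claim that the alternating Cassels--Tate pairing forces $\delta$ to be even. That is false: the divisible part is exactly the kernel of the Cassels--Tate pairing, so the pairing's alternating, non-degenerate restriction to the finite quotient $\Sha(E^d)[2^\infty]/\Sha(E^d)[2^\infty]_{\mathrm{div}}$ only forces that \emph{finite} group to have square order; it imposes no constraint on the parity of $\delta$. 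Controlling $\delta\bmod 2$ is precisely the open obstruction separating the $p$-parity theorem from the parity conjecture, and it is resolved only under the (conjectural) finiteness of $\Sha(E^d)[2^\infty]$, i.e.\ $\delta=0$. Your fallback ``finite verification'' has the same defect: an explicit isogeny descent or full $2$-descent computes Selmer ranks, not Mordell--Weil ranks, so matching it against Rohrlich's local root numbers again proves only the Selmer-rank parity. This is why the paper (correctly) keeps the statement as a hypothesis rather than a theorem.
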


It follows from Proposition \ref{prop:infinity} that the contribution of $d$'s ($|d|$ is a prime) for which the root number $w(E^d)$ is equal to $1$ to the $\#S(X)$ is negligible since by Conjecture \ref{conj:1} $100\%$ of the curves $E^d$ will have rank $0$ or $1$ and by Conjecture \ref{conj:2} that rank is even, hence zero.

On the other hand, in the case $w(E^d)=-1$, if $\dim_{\FF_2}\Sel(E^d)=3$ (see Proposition \ref{prop:infinity} for the description of $\Sel(E^d)$) then by Conjecture \ref{conj:2} $\rank(E^d)=1$ so $\Sha(E^d)[2]$ is trivial (note that $E^d$ has full rational two torsion, hence $\dim_{\FF_2}\Sha(E^d)[2] =\dim_{\FF_2} \Sel(E^d)-\rank(E^d) - 2 = 0$). 

Hence the only interesting case (in which we expect $\Sha(E^d)[2]$ generically to be nontrivial) is when $\dim_{\FF_2}\Sel(E^d)=5$ or equivalently (see Proposition \ref{prop:sha_trivial}) when $d\in T=T^+ \cup T^-$ where

\begin{align*}
	T^+ &= \{d >0: |d| \textrm{ is prime}, \legendre{d}{13} = 1, \legendre{d}{3}=1, d\equiv 1 \pmod{8}  \},\\
	T^- &= \{d<0: |d| \textrm{ is prime}, \legendre{d}{13}=1, \legendre{d}{2}\cdot\legendre{d}{3}=-1, d\equiv 5,7 \pmod{8}\}.
\end{align*}

Define 
\begin{align}\label{eq:quartics}
	\begin{split}
		H_1&: y^2=4 x^4-56 x^2+169 \in \Sel(E),\\
		H_2&: y^2=18 x^4-24 x^3-32 x^2+40x + 34 \in \Sel(E),\\	
		F_1&: y^2=11 x^4+12 x^3+56x^2+24x+68 \in \Sel(E^{-1}),\\
		F_2&: y^2=x^4+56x^2+676 \in \Sel(E^{-1}).\\
	\end{split}
\end{align}

We show in Proposition \ref{prop:sha_trivial} that if $d\in T$, $\Sel(E^d)$ is generated by  the image of the two torsion $E^d[2]$ under the Kummer map, $H^d$, and by the quadratic twists of those classes in \eqref{eq:quartics} which land in $\Sel(E^d)$. Hence for such $d$'s $\dim_{\FF_2}\Sel(E^d) = 5$. Proposition \ref{prop:ELS_quartics} describes when these twists of quartics in \eqref{eq:quartics} are ELS. Note that this simple explicit description of $\Sel(E^d)$ (see Proposition \ref{prop:sha_trivial}) is the main reason why we considered only quadratic twists by $d$ where $|d|$ is prime. In general, for squarefree $d$, $\dim_{\FF_2}\Sel(E^d)$ is unbounded.

Assuming the parity conjecture for $E^d$, where $d\in T$, we can deduce that $\dim_{\FF_2} \Sha(E^d)[2] = 0$ or $2$. Assume further that $\Sha(E^d)[2] \ne 2\Sha(E^d)[4]$. The non-degeneracy of the Cassels-Tate pairing implies that for $d$ such that $\iota(H^d)\ne 0$ there exists class $L\in \Sel(E^d)$ (also with $\iota(L)\ne 0$) for which  $\CT{H^d}{L}=1$. The following theorem then follows easily from Section \ref{sec:governing}, Proposition \ref{prop:sha_trivial} and the previous discussion.

\begin{theorem}\label{thm:main}
	Let $d\in T$ such that $\Sha(E^d)[2] \ne 2\Sha(E^d)[4]$. Assuming the parity conjecture for $E^d$, the following is true.
	\begin{enumerate}
		\item [a)] If $d<0$ and $d \equiv 1 \pmod{4}$ then $\langle H^d, F_1^{-d}\rangle_{CT}=1$. In particular, $\iota(H^d) \ne 0 \in \Sha(E^d)[2]$. 
		\item [b)] If $d<0$ and $d \equiv 3 \pmod{4}$ then $\iota(H^d) \ne 0$ if and only if $\langle H^d, F_2^{-d}\rangle_{CT}=1$. 
		\item[c)] If $d>0$ then $\iota(H^d) \ne 0$ if and only if $\langle H^d, H_1^d\rangle_{CT}=1$ or $\langle H^d, H_2^d\rangle_{CT}=1$.
	\end{enumerate}
\end{theorem}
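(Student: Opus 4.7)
The plan is to convert the question of whether $\iota(H^d)\ne 0$ into a concrete Cassels--Tate pairing computation, exploiting the explicit description of $\Sel(E^d)$ from Proposition~\ref{prop:sha_trivial} and the non-degeneracy of the pairing. Under the parity conjecture and the hypothesis $\Sha(E^d)[2]\ne 2\Sha(E^d)[4]$, one gets $\rank(E^d)=1$, $\dim_{\FF_2}\Sel(E^d)=5$ and $\dim_{\FF_2}\Sha(E^d)[2]=2$ with $2\Sha(E^d)[4]=0$, so the Cassels--Tate pairing descends to an alternating non-degenerate form on the two-dimensional $\FF_2$-space $\Sha(E^d)[2]$. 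Since the Kummer image of $E^d[2]$ and the free rank-one generator both lie in $E^d(\QQ)/2E^d(\QQ)$, they pair trivially with everything; hence $\iota(H^d)\ne 0$ if and only if $\CT{H^d}{L}=1$ for some $L$ among the non-torsion Selmer generators coming from ELS twists of the quartics in \eqref{eq:quartics}.

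Parts (b) and (c) then follow essentially formally by combining this observation with the explicit list of ELS twists provided by Proposition~\ref{prop:ELS_quartics}. In case (c), $H_1^d$ and $H_2^d$ are the two ELS twists that (together with $H^d$) give the three non-torsion generators of $\Sel(E^d)$, so non-degeneracy immediately yields $\iota(H^d)\ne 0$ iff at least one of $\CT{H^d}{H_1^d}$, $\CT{H^d}{H_2^d}$ equals $1$. In case (b), with $d<0$ and $d\equiv 3\pmod 4$, the quartic $F_2^{-d}$ plays the analogous singular role (and the alternating property discards any trivial self-pairing), giving the ``iff'' in the same way.

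The substantive content is therefore part (a), where the assertion $\CT{H^d}{F_1^{-d}}=1$ is an unconditional evaluation, not merely a non-vanishing statement, and $\iota(H^d)\ne 0$ is only its corollary. My plan is to compute this pairing directly using the governing-field formalism of Section~\ref{sec:governing}: express the Cassels--Tate pairing as a sum of local Hilbert-symbol contributions after a judicious choice of global lifts adapted to both $H^d$ and $F_1^{-d}$, observe that global reciprocity forces all local contributions away from $|d|$ either to vanish or to cancel, and identify the remaining contribution at $|d|$ with the Frobenius class of $|d|$ in a fixed abelian extension---the governing field of the pair $(H, F_1)$. The Legendre-symbol and congruence conditions packaged into $T^-$ together with the extra constraint $d\equiv 1\pmod 4$ are designed precisely to pin down this Frobenius, forcing the local invariant at $|d|$ to equal $1$.

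The main obstacle will be this governing-field analysis: identifying the correct governing field for the pair $(H,F_1)$; bookkeeping all local invariants at the bad primes $2$, $3$, $13$ together with the primes of bad reduction for $F_1$; verifying with chosen lifts that the away-from-$|d|$ contributions really do cancel; and matching the residual Frobenius condition with the combined conditions defining $T^-\cap\{d\equiv 1\pmod 4\}$. Once (a) is in hand, (b) and (c) are immediate consequences of the Selmer-group description together with the non-degeneracy of the induced pairing on $\Sha(E^d)[2]$.
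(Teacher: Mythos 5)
Your overall strategy matches the paper's: under the parity conjecture and the hypothesis $\Sha(E^d)[2]\ne 2\Sha(E^d)[4]$, the Cassels--Tate pairing is non-degenerate on the two-dimensional space $\Sha(E^d)[2]$, the Kummer images pair trivially, and the question reduces to pairing $H^d$ against the explicit non-torsion generators of $\Sel(E^d)$ given by Proposition \ref{prop:sha_trivial}; part (a) rests on the computation for the pair $(H^{-1},F_1)$. Part (c) is indeed formal. However, your claim that part (b) also ``follows essentially formally'' from the ELS list has a genuine gap: for $d\in T^-$ with $d\equiv 3\pmod 4$ one has $d\equiv 7\pmod 8$, so $p=-d\equiv 1\pmod 8$ and, by Proposition \ref{prop:ELS_quartics}, $F_1^{-d}=F_1^{p}$ is still everywhere locally solvable; by Proposition \ref{prop:sha_trivial} the non-torsion generators of $\Sel(E^d)$ are then $H^d$, $F_1^{-d}$ \emph{and} $F_2^{-d}$. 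Non-degeneracy alone therefore only yields ``$\iota(H^d)\ne 0$ iff $\CT{H^d}{F_1^{-d}}=1$ or $\CT{H^d}{F_2^{-d}}=1$''. The ``only if'' direction of (b) as stated needs the additional, non-formal input $\CT{H^d}{F_1^{-d}}=0$ when $d\equiv 3\pmod 4$ --- precisely the complementary half of the $(H^{-1},F_1)$ computation you carry out for (a) (the pairing vanishes iff $p$ splits completely in $\QQ(\sqrt{-1},\sqrt{-2},\sqrt{13})$, i.e.\ iff $p\equiv 1\pmod 4$). You must invoke this explicitly; as written, (b) is not proved.

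A smaller point on your plan for (a): the pair $(H^{-1},F_1)$ falls into the \emph{alternating} case of Smith's theorem, since both classes are supported on $P_3$ (so $e_2(H^{-1}(\sigma),F_1(\sigma))=1$ and $H^{-1}\cup F_1$ is alternating). Hence there is no governing quadratic extension in the strict sense; rather, the pairing is constant on each class of the set $\mathcal{D}$, and the splitting condition in $\QQ(\sqrt{-1},\sqrt{-2},\sqrt{13})$ merely records which class of $\mathcal{D}$ the prime $p$ lies in. Your proposed identification of ``the residual local invariant at $|d|$ with a Frobenius in the governing field'' needs to be rephrased accordingly, but it lands in the same place: the value of $\CT{H^{d}}{F_1^{-d}}$ is determined by $p\bmod 4$ together with the conditions defining $T^-$, and equals $1$ exactly when $d\equiv 1\pmod 4$.
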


It remains to explain how to compute the Cassels-Tate pairing of the quadratic twists of quartics.
To each pair $(A,B)$ of quartics from  Table \ref{table:fields} (see \eqref{eq:quartics}), by the work of Smith (see Theorem 3.2. in \cite{Smith}), we can associate the governing field $L_{A,B}$ such that 
the value of pairing $\CT{A^d}{B^d}$ is determined by $\CT{A}{B}$ and the splitting behaviour of $d$ in $L_{A,B}$. For example, for $d\in T$, it follows that $\CT{H^d}{H^d_2}=0$ if and only if $d$ splits completely in $L=\QQ(\sqrt{-1},\sqrt{2},\sqrt{13})(\sqrt{4+2\sqrt{13}})$. For complete description of governing fields see Table \ref{table:fields} and Section \ref{sec:governing}. Section \ref{sec:governing} and Proposition \ref{prop:sha24} imply the following corollary of Theorem \ref{thm:main}.

\begin{table}[h!]
	\centering
	\begin{tabular}{|c|c|c|}
		\hline
		$\CT{A^d}{B^d}$ &  $K_{A,B}$ & $\alpha_{A,B}$ \\
		\hline
		$\CT{H^d}{H_1^d}$ & $\QQ(\sqrt{3},\sqrt{13})$ &  $4+\sqrt{13}$\\
		$\CT{H^d}{H_2^d}$ & $\QQ(\sqrt{-1},\sqrt{2}, \sqrt{13})$ & $4+2 \sqrt{13}$\\
		$\CT{H^{-d}}{F_1^d}$ & $\QQ(\sqrt{-2},\sqrt{13})$ & $-1$\\
		$\CT{H^{-d}}{F_2^d}$ & $\QQ(\sqrt{13},\sqrt{-1}, \sqrt{-3})$ & $3(1+\sqrt{13})(3+\sqrt{13})$\\
		$\CT{H_1^{d}}{H_2^d}$ & $\QQ(\sqrt{3}, \sqrt{-1}, \sqrt{2})$ & $8(1+\sqrt{3})(4+2\sqrt{3})$\\ 
		$\CT{F_1^{d}}{F_2^d}$ & $\QQ(\sqrt{3}, \sqrt{-1}, \sqrt{2})$ & $8(1+\sqrt{3})(4+2\sqrt{3})$\\
		\hline
	\end{tabular}
	\label{table:fields}
	\vskip 0.1cm
	\caption{ For $d=p > 0$ which splits completely in $K_{A,B}$ (and in the case $\CT{H^d}{H_1^d}$ we in addition require $p \equiv 1 \pmod{4}$), we have $\CT{A^d}{B^d}=0$ if and only if $d$ splits completely in a governing field $L_{A,B}=K_{A,B}(\sqrt{\alpha_{A,B}})$.}
\end{table}

\begin{corollary}\label{cor:main}
	Let $d \in T$. Assuming the parity conjecture for $E^d$, if $d$ does not split completely in $L_{H_1,H_2}=L_{F_1,F_2}$ and
	\begin{enumerate}
		\item [a)] $d=-p<0$ with $p \equiv 1 \bmod{4}$ and $p$ splits completely in $L_{H^{-1},F_2}$, or
		\item [b)] $d=p>0$ and $p$ splits completely in $L_{H,H_1}$ and $L_{H,H_2}$,
	\end{enumerate}
 then $H^{d}(\QQ) \ne \emptyset $. Hence, for such $d$ there exists infinitely many $D(d)$-quintuples.
\end{corollary}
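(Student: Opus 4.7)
The plan is to reduce $H^d(\QQ)\neq\emptyset$ to showing $\iota(H^d)=0$ in $\Sha(E^d)[2]$ and then invoke Theorem \ref{thm:main} via the governing-field computations of Table \ref{table:fields}. Since $d\in T$ guarantees that $H^d$ is everywhere locally solvable (Proposition \ref{prop:ELS}), the exact sequence \eqref{eq:exact} identifies $\iota(H^d)=0$ with $H^d(\QQ)\neq\emptyset$; Proposition \ref{prop:infinity} then turns a single rational point on $H^d$ into infinitely many rational points, and hence into infinitely many $D(d)$-quintuples via the parametric family from the introduction.

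First I would use the hypothesis that $d$ does not split completely in $L_{H_1,H_2}=L_{F_1,F_2}$ to verify the standing assumption $\Sha(E^d)[2]\neq 2\Sha(E^d)[4]$ of Theorem \ref{thm:main}. By Table \ref{table:fields} this hypothesis yields $\CT{H_1^d}{H_2^d}=1$ when $d>0$ and $\CT{F_1^{-d}}{F_2^{-d}}=1$ when $d<0$. Since the Cassels-Tate pairing on $\Sel(E^d)$ factors through $\iota\times\iota$ into the non-degenerate alternating form on $\Sha(E^d)[2]/2\Sha(E^d)[4]$, a non-zero Selmer value must survive in the Sha quotient; this forces $\Sha(E^d)[2]\neq 2\Sha(E^d)[4]$ and places us in the setting of Theorem \ref{thm:main}.

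I would then finish case by case by reading the remaining pairings through Table \ref{table:fields}. For case (a), $d=-p<0$ with $p\equiv 1\pmod{4}$ gives $d\equiv 3\pmod{4}$; the hypothesis that $p$ splits completely in $L_{H^{-1},F_2}$ a fortiori gives splitting in the base field $K_{H^{-1},F_2}$, so the Table delivers $\CT{H^d}{F_2^{-d}}=0$, and Theorem \ref{thm:main}(b) concludes $\iota(H^d)=0$. For case (b), $d=p>0$ lying in $T^+$ satisfies $d\equiv 1\pmod{8}$ and $\legendre{d}{13}=\legendre{d}{3}=1$, which already forces $p$ to split completely in $K_{H,H_1}=\QQ(\sqrt{3},\sqrt{13})$ and $K_{H,H_2}=\QQ(\sqrt{-1},\sqrt{2},\sqrt{13})$; together with the hypothesis that $p$ splits completely in both $L_{H,H_1}$ and $L_{H,H_2}$, the Table delivers $\CT{H^d}{H_1^d}=\CT{H^d}{H_2^d}=0$, and Theorem \ref{thm:main}(c) concludes $\iota(H^d)=0$.

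The main subtlety I expect is the propagation step in the second paragraph: one must verify that a non-zero Cassels-Tate value on two named Selmer classes really does produce a non-trivial pairing on $\Sha(E^d)[2]/2\Sha(E^d)[4]$. This follows because the Selmer-level pairing factors through $\iota$ and vanishes on its kernel, so a non-zero value cannot come from the kernel and must descend non-trivially to the Sha quotient. The rest of the argument is essentially bookkeeping between the generators of $\Sel(E^d)$ supplied by Proposition \ref{prop:sha_trivial} and the rows of Table \ref{table:fields}, together with the easy verification that the congruence conditions defining $T^\pm$ force splitting in each relevant $K_{A,B}$.
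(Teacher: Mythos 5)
Your proposal is correct and follows essentially the same route the paper intends: translate the splitting hypotheses into Cassels--Tate values via Table \ref{table:fields}, use the non-split condition in $L_{H_1,H_2}=L_{F_1,F_2}$ together with (the easy direction of) Proposition \ref{prop:sha24} to secure $\Sha(E^d)[2]\ne 2\Sha(E^d)[4]$, and then conclude $\iota(H^d)=0$ from Theorem \ref{thm:main} (b) and (c), with Proposition \ref{prop:infinity} upgrading a rational point to infinitely many quintuples. The case bookkeeping (e.g.\ that $p\equiv 1\pmod 4$ and $d\in T^-$ force $d\equiv 7\pmod 8$ so that $F_1^{-d},F_2^{-d}$ generate, and that $T^+$ forces splitting in the relevant $K_{A,B}$) is handled correctly.
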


\begin{remark}\label{rem:noT}
As we already noted, if for $d=\pm p$ we have that $\legendre{p}{13}=1$ and $\legendre{p}{2}\cdot\legendre{p}{3}\cdot\legendre{p}{13}=1$ (hence $w(E^d)=-1$ by Proposition \ref{prop:root}), but if  $d \notin T$ (hence $\dim_{\FF_2}\Sel(E^d)=3$), then by Conjecture \ref{conj:2} $\Sha(E^d)[2]$ is trivial, and $H^d(\QQ) \ne \emptyset$, so there exists infinitely many $D(d)$-quintuples.
\end{remark}

\begin{example}
	The set of $d\in T$, $|d| < 3000$, for which Corollary \ref{cor:main} implies that $H^{d}(\QQ) \ne \emptyset$ is equal to
	$$\{  -2857, -2833, -1993, -601, -337, -313,
	1993, 2833, 2857\}.$$
	For $d=-313$, we find a point  $(-2107/1202,389073/1444804)\in H^{-313}(\QQ)$ which produces a $D(-313)$-quintuple
	\footnotesize\begin{equation*}
	\left\{\frac{81062614477261}{1313828969096}, \frac{15660515591}{623554328}, \frac{9009021853}{546517874}, \frac{28246175292437}{1313828969096}, \frac{2532614}{129691}\right\}.
	\end{equation*}\normalsize
\end{example}
\begin{remark}
	Results about infinite number of $D(d)$-quintuples obtained as above from $d\in T$ where $d<0$ are new, they are not covered in \cite{Drazic}.
\end{remark}

Using Chebotarev's density theorem to determine the factorization of primes in governing fields, we obtain the following bounds for $S(X)$.

\begin{corollary}\label{cor:limit}
	Assuming Conjecture \ref{conj:1}, we have that as $X \rightarrow \infty$
	$$C_1 +o(1) \le \frac{\#S(X)}{2\pi(X)} \le C_2  + o(1),$$
	where $C_1=\frac{43}{256}$ and $C_2=\frac{46}{256}$.
\end{corollary}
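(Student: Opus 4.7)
The plan is to stratify the signed primes $|d|<X$ according to (i) the root number $w(E^d)$, (ii) whether $d\in T$, and (iii) the splitting of $|d|$ in the finite list of governing fields from Table \ref{table:fields}; the density of each stratum, as a fraction of $2\pi(X)$, is then computable by Chebotarev and turns out to be an explicit dyadic rational. As a preliminary reduction, I would discard the $d$ with $w(E^d)=+1$: by Conjecture \ref{conj:1} together with the parity conjecture (Conjecture \ref{conj:2}, inherited from Theorem \ref{thm:main}), a full-density subset of them has $\rank(E^d)=0$, so Proposition \ref{prop:infinity} gives $H^d(\QQ)=\emptyset$. Hence, up to an $o(\pi(X))$ error, $\#S(X)$ counts the primes $|d|<X$ with $w(E^d)=-1$ and $\iota(H^d)=0$.

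Among the primes with $w(E^d)=-1$ I would first isolate those with $d\notin T$. Here Proposition \ref{prop:sha_trivial} forces $\dim_{\FF_2}\Sel(E^d)=3$; the parity conjecture gives $\rank(E^d)=1$, so $\dim_{\FF_2}\Sha(E^d)[2]=0$ and by Remark \ref{rem:noT} we have $H^d(\QQ)\ne\emptyset$ automatically. The density of such $d$ is a sum of dyadic rationals determined by the root-number conditions of Proposition \ref{prop:root}, the ELS condition $\legendre{|d|}{13}=1$, and the negation of the extra congruence and Legendre-symbol conditions defining $T^\pm$.

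For $d\in T$ I would further split into Case A, $\Sha(E^d)[2]\ne 2\Sha(E^d)[4]$, and Case B, $\Sha(E^d)[2]=2\Sha(E^d)[4]$. By Proposition \ref{prop:sha24}, Case B is characterized by complete splitting of $|d|$ in $L_{H_1,H_2}=L_{F_1,F_2}$, so its density within $T$ is explicit. In Case A, Theorem \ref{thm:main} reduces $\iota(H^d)=0$ to the vanishing of one or two Cassels--Tate pairings (or forces $\iota(H^d)\ne 0$ outright when $d<0$ and $d\equiv 1\pmod 4$); by Table \ref{table:fields} each such pairing is determined by the splitting of $|d|$ in a fixed quadratic extension of a fixed multi-quadratic field, to which Chebotarev again applies. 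Case A contributes both to $C_1$ and $C_2$, while Case B contributes only to $C_2$ since the present method is silent there.

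The lower bound $C_1=43/256$ is then obtained by adding the $d\notin T$ contribution to the Case A contribution satisfying $\iota(H^d)=0$, averaged over $d>0$ and $d<0$, with the negative primes further broken up by $d\bmod 4$ as in Theorem \ref{thm:main}(a)--(b); the upper bound $C_2=46/256$ is obtained by additionally including the entire density of Case B. The main obstacle will be the Chebotarev bookkeeping: one must verify that the compositum of all governing fields $L_{A,B}$ from Table \ref{table:fields}, together with the abelian fields cutting out the sign, congruence and Legendre-symbol conditions defining $T^\pm$ and the root-number condition, has over $\QQ$ the expected Galois group, essentially a product of copies of $\ZZ/2\ZZ$, so that the splitting constraints are genuinely independent. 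Concretely this reduces to checking that square roots such as $\sqrt{4+\sqrt{13}}$ and $\sqrt{4+2\sqrt{13}}$ generate genuinely new quadratic extensions at each stage, a finite computation with discriminants, after which the dyadic contributions combine cleanly to give $43/256$ and $46/256$.
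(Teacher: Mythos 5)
Your outline follows the same route as the paper's proof: discard the $d$ with $w(E^d)=+1$ via Conjectures \ref{conj:1}--\ref{conj:2} and Proposition \ref{prop:infinity}; count the $d\notin T$ with $w(E^d)=-1$ (where $\Sha(E^d)[2]$ is forced to vanish) by Dirichlet on explicit residue classes; and split $d\in T$ according to whether $\Sha(E^d)[2]=2\Sha(E^d)[4]$, computing each stratum's density by Chebotarev in composita of the fields from Table \ref{table:fields}. The one substantive error is your description of Case B. Proposition \ref{prop:sha24} characterizes $\Sha(E^d)[2]=2\Sha(E^d)[4]$ by the simultaneous vanishing of \emph{all} the relevant Cassels--Tate pairings --- for $d>0$ this means $|d|$ splits completely in the compositum $L_{H,H_1}L_{H,H_2}L_{H_1,H_2}$, which has degree $128$, not merely in $L_{H_1,H_2}$. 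Imposing only splitting in $L_{H_1,H_2}$ on top of $d\in T^+$ cuts out a field of degree $32$, so your version of Case B for $d>0$ would have density $1/32$ instead of $1/128$ and would contribute $4/256$ rather than $1/256$ to $C_2-C_1$, destroying the value $C_2=46/256$; a similar overcount occurs for $d<0$, $d\equiv 7\pmod 8$ (and note that for $d<0$, $d\equiv 5\pmod 8$ Case B is empty, since $\CT{H^d}{F_1^{-d}}=1$ always there). With that correction --- and with the Chebotarev bookkeeping you defer actually carried out (the paper verifies, e.g., that $K_{H_1,H_2}\subset L_{H,H_1}L_{H,H_2}$ and that $\deg L_{H,H_1}L_{H,H_2}=64$) --- your argument becomes the paper's.
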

\begin{remark}
We can rephrase the result above by saying that the classes $H^d\in \Sel(E^d)$, for $d\in T$, are ``equidistributed'' in the quotient $\Sel(E^d)/\kappa({E^d[2]})-\{0\}$ with respect to the image of rational points $E^d(\QQ)$ (rank is generically $1$) under the Kummer map $\kappa:E^d(\QQ)/2E^d(\QQ)\rightarrow \Sel(E^d)$, since we have that the probability for $H^d \in \kappa({E^d(\QQ)})/\kappa({E^d[2]})\subset \Sel(E^d)/\kappa({E^d[2]})$ is $1/7$. Recall that by Proposition \ref{prop:infinity} $H^d$ is never an element of $\kappa({E^d[2]})$ (thus $1/7$ and not $1/8$ is the ``right'' answer).
\end{remark}

By Proposition \ref{prop:sha24} and Conjecture \ref{conj:1} the density of $d$'s ($|d|$ is a prime) for which $\Sha(E^d)[2]$ is nontrivial and $\Sha(E^d)[2] =2\Sha(E^d)[4]$ is $\frac{3}{256}$, so with this method we can not bridge the gap between $C_1$ and $C_2$.

\section{Local properties}

\begin{proposition}\label{prop:ELS}
	For a square-free $d\in \ZZ$, the quartic $H^d$ is everywhere locally solvable if and only if for all primes $p|d$ we have $\legendre{p}{13}=1$ or $p=13$.
\end{proposition}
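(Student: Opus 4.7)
The plan is a place-by-place analysis of local solvability of $H^d: dy^2 = f(x)$ where $f(x) = (x^2-x-3)(x^2+2x-12)$. First I compute $\mathrm{disc}(f) = 2^2\cdot 3^6\cdot 13^2$ (the two quadratic factors have discriminants $13$ and $52$, and their resultant equals $27$), so $H$ has good reduction outside $\{2,3,13\}$, confining the non-trivial analysis to $v \in \{\infty, 2, 3, 13\}$ and to primes $p \mid d$.

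At $\infty$ I use $x \to \infty$ for $d > 0$ and $f(-3) = -81$ for $d < 0$. For $p \nmid 2\cdot 3\cdot 13\cdot d$, the smooth genus-one reduction $H^d_{\FF_p}$ has an $\FF_p$-point by Hasse--Weil, which Hensel lifts. For $p \in \{2,3,13\}$ with $p \nmid d$, I plan to exhibit $\QQ_p$-points by enumerating square-classes of $d$: at $p = 2$ the four odd classes modulo $8$ are covered by $f(0)=36,\; f(1)=27,\; f(-1)=13,\; f(-3)=-81$; at $p = 3$ one uses $f(2)=4$ or $f(-2)=-36$. The key observation at $p = 13$ is that both quadratic factors of $f$ become squares modulo $13$, since their discriminants are $\equiv 0 \bmod 13$, giving $f(x) \equiv ((x-7)(x+1))^2 \pmod{13}$; this immediately produces $H^d(\QQ_{13}) \ne \emptyset$ via $x = \infty$ when $\legendre{d}{13} = 1$, and the converse follows from a parity-of-valuation argument (at $x \equiv 7, -1 \bmod 13$ one computes $v_{13}(f(x)) = 1$, and at all other $x$ one has $v_{13}(f(x)) = 0$ with $f(x)$ a square, neither compatible with $dy^2 = f(x)$ when $\legendre{d}{13} = -1$).

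The heart of the proof is the case $p \mid d$. For an odd prime $p \notin \{3, 13\}$, if $\legendre{p}{13} = -1$ then the common discriminant $13$ of both factors is a non-square modulo $p$, so $f$ has no roots in $\FF_p$; this forces $v_p(f(x))$ to be even for every $x \in \QQ_p$, clashing with the odd parity of $v_p(dy^2) = 1 + 2v_p(y)$, hence $H^d(\QQ_p) = \emptyset$. Conversely, if $\legendre{p}{13} = 1$ then $f$ has a simple root $x_0$ modulo $p$; since $f'(x_0) \not\equiv 0 \bmod p$, the affine map $k \mapsto f(x_0 + pk)/p \bmod p$ is surjective onto $\FF_p$, so I can choose the lift so that $f(x_0)/d \equiv 1 \bmod p$ and extract $y \in \QQ_p$ by Hensel. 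The same parity argument at $p = 2 \mid d$ shows $v_2(f(x))$ is always even (namely $0, 2$, or $4v_2(x)$ according to whether $v_2(x)$ is $0, \ge 1,$ or $< 0$), ruling out $\QQ_2$-points whenever $2 \mid d$; this matches $\legendre{2}{13} = -1$. For $p \in \{3, 13\}$ dividing $d$, I will produce explicit points directly, exploiting the factorizations $f(x) \equiv x^2(x-1)^2 \bmod 3$ and $f(x) \equiv ((x-7)(x+1))^2 \bmod 13$, using $x = 1$ or $x = -8$ at $p = 3$ and $x = 7$ at $p = 13$.

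Combining: $H^d$ is ELS iff $2 \nmid d$ and $\legendre{p}{13} = 1$ for every odd $p \mid d$ with $p \ne 13$, which is the stated condition. The constraint at $p = 13$ in the case $13 \nmid d$ is automatically redundant since $\legendre{d}{13} = \prod_{p \mid d} \legendre{p}{13}$ evaluates to $1$ once the other constraints hold (using $\legendre{-1}{13} = 1$); similarly for $13 \mid d$, the required square-class of $d/13$ modulo $13$ is automatic. The main conceptual point of the proof is the equivalence at odd $p \mid d$, $p \ne 13$, between solvability and $\legendre{p}{13} = 1$, which hinges on the fact that both quadratic factors share the discriminant class of $13$ in $\FF_p^{\times}/\FF_p^{\times 2}$; the remaining obstacle is purely the bookkeeping of square-classes at $p = 2, 3$ and the sign of $d$, all handled by elementary Hensel-type considerations.
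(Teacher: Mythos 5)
Your proof is correct and follows essentially the same route as the paper's: a place-by-place analysis whose key point is that both quadratic factors of $f$ have discriminant in the square class of $13$, combined with Hasse--Weil plus Hensel at primes of good reduction. You supply some details the paper leaves implicit --- the parity-of-valuation argument behind the ``only if'' direction, the explicit square-class bookkeeping at $v\in\{2,3,\infty\}$ (where the paper instead invokes geometric irreducibility of the mod-$p$ reductions), and the separate treatment of $p=3,13$ dividing $d$ where $f$ has only double roots mod $p$ and a naive Hensel lift at a simple root does not apply --- but the overall strategy is the same.
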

\begin{proof}
	Assume that $H^d$ is ELS. It follows that for every prime $p|d$, $p\ne 13$, the equation $(x^2-x-3)(x^2+2x-12)=0$ has a solution in $\Fp$, which implies that $\legendre{p}{13}=1$ since the discriminant of quadratic factors is $13$ and $4\cdot 13$ respectively.
	
	Conversely assume that for all primes $p|d$ we have $\legendre{p}{13}=0$ or $1$. Obviously,  $H^d(\RR)\ne \emptyset$. If $p|d$, then by assumption there is a solution $(x^2-x-3)(x^2+2x-12)=0$ in $\Fp$ which lifts by Hensel lemma to $H^d(\QQ_p)$. If $p \nmid 2\cdot 3\cdot 13d$, then $H^d$ has a good mod $p$ reduction since $2,3$ and $13$ are only primes dividing discriminant of $(x^2-x-3)(x^2+2x-12)$. It follows that $H^d/\Fp$ is a genus one curve, hence $H^d(\Fp)\ne \emptyset$, thus by Hensel's lemma $H^d(\QQ_p)\ne \emptyset$. It remains to consider cases $p=2,3,13$ and $p \nmid d$. Here reductions mod $2$, $3$ and $13$ of $H^d$ are geometrically irreducible genus zero curve, so it follows that  $H^d(\Fp)\ne \emptyset$, and consequently $H^d(\QQ_p) \ne \emptyset$ for $p=2,3,13$.
\end{proof}
\begin{remark}
	Novak \cite{Novak_diplomski} showed (assuming GRH) that asymptotically the number of squarefree $d$'s, $0<d<x$, for which $H^d$ is ELS is equal to
	$$\frac{2\sqrt{273}}{13}\pi^{-3/2}\prod_p \left(1+\frac{1}{p} \right)^{\legendre{p}{13}/2}\frac{x}{\sqrt{\log{x}}}.$$
\end{remark}
Similarly, the following proposition describes local solvability of quartics from \eqref{eq:quartics}.
\begin{proposition}\label{prop:ELS_quartics}
	Let $p$ be a prime and $d=\pm p$.
	\begin{enumerate}
		\item [a)] $H_1^d$ is everywhere locally solvable if and only if $d\equiv 1 \pmod{12}$ or $d=-3$.
		\item [b)] $H_2^d$ is everywhere locally solvable if and only if $d>0$ and $d\equiv 1 \pmod{8}$.
		\item [c)] $F_1^d$  is everywhere locally solvable if and only if $d>0$ and $d\equiv 1,3 \pmod{8}$.
		\item [d)] $F_2^d$  is everywhere locally solvable if and only if $d>0$ and $d\equiv 1 \pmod{12}$.
	\end{enumerate}
\end{proposition}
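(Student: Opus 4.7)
The plan is to follow the template of Proposition~\ref{prop:ELS}: for each quartic $Q \in \{H_1, H_2, F_1, F_2\}$ with $Q : y^2 = f(x)$, I would check solvability of $Q^d : d y^2 = f(x)$ at every place of $\QQ$ and then intersect the local conditions. At the archimedean place a direct inspection of each $f$ (completing the square, or locating real roots) shows that $H_2$, $F_1$, $F_2$ are positive definite on $\RR$ -- forcing $d > 0$ in parts (b)--(d) -- while $4x^4 - 56x^2 + 169$ factors over $\QQ(\sqrt 3)$ as $(2x^2 - 14 - 3\sqrt 3)(2x^2 - 14 + 3\sqrt 3)$, has four real roots, and takes both signs, so part (a) carries no archimedean sign obstruction.

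The discriminants of the four polynomials $f$ are supported only on $\{2, 3, 13\}$, so for any prime $\ell$ with $\ell \nmid 2 \cdot 3 \cdot 13 \cdot d$ the reduction $Q^d/\FF_\ell$ is a smooth genus one curve; Hasse--Weil combined with Hensel's lemma then yields $Q^d(\QQ_\ell) \neq \emptyset$ unconditionally. The nontrivial local conditions therefore arise only from $v \in \{\infty, 2, 3, 13, p\}$ where $p = |d|$.

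At the prime $p = |d|$ with $p \notin \{2, 3, 13\}$, since $v_p(d) = 1$ the $v_p$-valuation of the right hand side of $d y^2 = f(x)$ is odd unless $p \mid f(x)$, so $\QQ_p$-solvability reduces via Hensel to the existence of a simple root of $f$ modulo $p$. Each $f$ factors over an explicit multiquadratic extension with discriminant supported on $\{-1, 2, 3, 13\}$: for example $x^4 + 56 x^2 + 676 = (x^2 + 2ix + 26)(x^2 - 2ix + 26)$ has roots in $\QQ(\sqrt{-1}, \sqrt 3)$, so $f$ has a root modulo $p$ iff $\legendre{-1}{p} = \legendre{3}{p} = 1$, i.e.\ $p \equiv 1 \pmod{12}$, matching part (d). Analogous factorizations of $H_1$, $H_2$, $F_1$, combined with quadratic reciprocity, translate the existence of a root modulo $p$ into the mod-$8$ or mod-$12$ congruences on $d$ stated in (a)--(c).

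The remaining, and technically most delicate, step -- which I expect to be the main obstacle -- is the finite case analysis at $\ell \in \{2, 3, 13\}$ with $\ell \nmid d$. At each such $\ell$ I would expand $f(x)$ to sufficient $\ell$-adic precision, and either exhibit a smooth $\FF_\ell$-point of $Q^d$ (liftable by Hensel), or read off the precise residue condition on $d$ modulo a small power of $\ell$ required for $\QQ_\ell$-solvability. The $\ell = 2$ analysis is in particular the source of the mod-$8$ conditions in (b), (c): since $f(x) \pmod 8$ takes only a short list of values as $x$ varies in $\ZZ_2$, matching $d y^2$ to $f(x)$ in $\QQ_2^\times/(\QQ_2^\times)^2$ forces the residue class of $d$ modulo $8$. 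The exceptional case $d = -3$ in (a) is handled by a direct $\QQ_3$-verification for $H_1^{-3}$, which has bad reduction at $3$ but still admits a $\QQ_3$-point. Collecting all the local conditions and eliminating redundancies yields the four biconditionals.
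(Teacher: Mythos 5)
Your proposal is correct and follows exactly the route the paper intends: the paper states Proposition \ref{prop:ELS_quartics} without proof, introducing it only with the remark that it is obtained ``similarly'' to Proposition \ref{prop:ELS}, i.e.\ by the same place-by-place local analysis (archimedean sign considerations, good primes via Hasse--Weil plus Hensel, the prime $p=|d|$ via simple roots of $f$ modulo $p$, and a finite check at $2,3,13$) that you lay out. Your explicit factorizations of $H_1$ over $\QQ(\sqrt{3})$ and of $F_2$ over $\QQ(\sqrt{-1},\sqrt{3})$ check out, and the deferred $2$- and $3$-adic case analysis is routine and does yield the stated congruences.
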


The following proposition computes the root number of $E^d$.

\begin{proposition}\label{prop:root}
	For $d=\pm p$ where $p\ne 2,3,13$ is a prime, the root number $w(E^d)$ is equal to $-1$ if and only if
	$$\legendre{p}{2}\cdot\legendre{p}{3}\cdot\legendre{p}{13}=1.$$
	Here $\legendre{\cdot}{2}$ is the Kronecker symbol for odd $d$ defined by $$\legendre{d}{2}=\begin{cases}
		1, &\textrm{if } |d|\equiv 1,7 \bmod(8)\\
		-1, &\textrm{if } |d|\equiv 3,5 \bmod(8).
	\end{cases}$$
\end{proposition}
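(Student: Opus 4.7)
The approach is to factor the global root number as $w(E^d)=\prod_v w_v(E^d)$ and evaluate each local factor. Since $w_\infty(E^d)=-1$ always and $w_q(E^d)=+1$ at primes of good reduction, and since the discriminant $\Delta_E=2^6\cdot 3^6\cdot 13^2$ is supported on $\{2,3,13\}$, only the places in $\{\infty,2,3,13,p\}$ contribute when $d=\pm p$ with $p\ne 2,3,13$, giving
\[ w(E^d)=-\,w_2(E^d)\,w_3(E^d)\,w_{13}(E^d)\,w_p(E^d). \]

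At $q=13$ and $q=3$ the local analysis is direct. Reducing $(x-9)(x-8)(x+18)$ modulo each prime yields $(x+4)(x+5)^2$ and $x^2(x+1)$ respectively; the tangent slopes at the node are $\pm\sqrt{-1}\in\FF_{13}$ (using $13\equiv 1\pmod 4$) and $\pm 1\in\FF_3$, both defined over the residue field, so $E$ has split multiplicative reduction at both primes. Since $\chi_d$ is unramified at 3 and 13 (as $p\notin\{3,13\}$) and sends a uniformizer to $\legendre{d}{q}$, twisting preserves the multiplicative nature of the reduction, and the standard formulas give $w_q(E^d)=-\legendre{d}{q}$ for $q\in\{3,13\}$.

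At the prime $p$, $E$ has good reduction and $\chi_d$ is ramified, so $E^d$ acquires Kodaira type $I_0^*$ at $p$; the local root number for type $I_0^*$ at an odd prime is a Hilbert-symbol expression (as in Rohrlich's tables) depending on the leading coefficient of $c_6$ for a minimal model, and after tracking the twist it evaluates to a Legendre symbol modulo $p$. The prime $2$ is the delicate step. The given Weierstrass model is not minimal at $2$ (one computes $v_2(c_4)=4$, $v_2(c_6)=7$, $v_2(\Delta)=6$), so one first passes to a long Weierstrass form over $\ZZ_2$ and runs Tate's algorithm on $E^d$ separately for each residue class of $d\pmod 8$; the Kodaira type of $E^d$ at $2$ varies across residues, and $w_2(E^d)$ must then be extracted in each case from Halberstadt's tables of local root numbers for elliptic curves over $\QQ_2$.

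Multiplying the four local contributions and simplifying by quadratic reciprocity, using $\legendre{-1}{13}=1$ and $\legendre{-1}{3}=-1$ to rewrite $\legendre{\pm p}{\cdot}$ in terms of $\legendre{p}{\cdot}$, one finds that the sign-of-$d$ dependence arising from $w_3(E^d)$ cancels against a compensating contribution in $w_2(E^d)$, leaving an expression in $\legendre{p}{2}$, $\legendre{p}{3}$, $\legendre{p}{13}$ that equals $-1$ precisely when their product is $+1$. The principal obstacle is the analysis at $q=2$: the Kodaira type of $E^d$ at $2$ is sensitive to $d\bmod 8$, and the additive-reduction root-number formulas must be specialized carefully in each subcase before the expected cancellation with the sign coming from $q=3$ becomes transparent.
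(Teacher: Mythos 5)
Your strategy is the same as the paper's: factor $w(E^d)$ into local root numbers supported on $\{\infty,2,3,13,p\}$, get $w_q(E^d)=-\legendre{d}{q}$ from split multiplicative reduction at $q=3,13$, handle the twist at $p$ by the standard type-$I_0^*$ formula, and extract $w_2(E^d)$ from Halberstadt's tables; the paper merely shortcuts the bookkeeping by citing Desjardins' global formula $w(E^d)=-w_2(E^d)w_3(E^d)w_{13}(E^d)\legendre{-1}{p}$, which already packages $w_\infty$, $w_p$, and the good primes into the factor $-\legendre{-1}{p}$. One concrete slip: the model \emph{is} minimal at $2$, since non-minimality would require $v_2(\Delta)\ge 12$ while here $v_2(\Delta)=6$; the data $v_2(c_4)=4$, $v_2(c_6)=7$, $v_2(\Delta)=6$ feed directly into Halberstadt's table (the paper reads off $w_2(E^d)=1$ iff $c_4/2^4-4c_6/2^7\equiv 7,11\pmod{16}$, i.e.\ iff $d\equiv 1,3\pmod 8$), so no change of model is needed. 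Finally, the two delicate evaluations are only asserted in your sketch: at $p$ the $I_0^*$ root number is $\legendre{-1}{p}$, and at $2$ one must actually verify that $w_2(E^d)=\mathrm{sgn}(d)\legendre{-1}{p}\legendre{p}{2}$, since it is exactly this factor whose $\mathrm{sgn}(d)$ cancels the one from $w_3(E^d)=-\mathrm{sgn}(d)\legendre{p}{3}$; without carrying these out the final product formula is not yet established.
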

\begin{proof}
	Theorem 1.1. in \cite{Desjardins} implies that 
	\begin{equation}\label{eq:root}
	w(E^d)=-w_2(E^d)w_3(E^d)w_{13}(E^d)\legendre{-1}{p},
	\end{equation}
	where $w_p(E^d)$ is a local root number at $p$ of $E^d$. Since $E^d$ has multiplicative reduction at $13$, Proposition 2 in \cite{Rohrlich93} implies that $w_{13}(E^d)=-\legendre{6b}{13}$ where $b= 64108800 d^3$, thus $w_{13}=-\legendre{p}{13}$ (since $w_{13}(E^d)=-1$ if and only if the reduction is split multiplicative). Likewise, for $p\ne 3$, $E^d$ has multiplicative reduction at $3$, hence $w_3(E^d)=-\legendre{d}{3}=-sgn(d)\legendre{p}{3}$. Moreover, since $j$-invariant  $j(E^d)=\frac{22235451328}{123201}$ is integral at $2$, $E^d$ has additive, potentially good reduction at $2$. One can check that $v_2(c_4(E^d))=4$, $c_4(E^d)/2^4 \equiv 3 \pmod{4}$, $v_2(c_6(E^d))=7$ and $v_2(\Delta(E^d))=6$, hence it follows from Table 1. in \cite{Halberstadt} that $w_2(E^d)=1$ if and only if $c_4/2^4-4 c_6/2^7\equiv 7,11 \pmod{16}$. Thus, one can check that $w_2(E^d)=1$ if and only if $d\equiv 1,3 \pmod{8}$, or equivalently $w_2(E^d)=\legendre{-1}{d}\legendre{d}{2}=sgn(d)\legendre{-1}{p}\legendre{p}{2}$. Claim now follows from \eqref{eq:root}.
\end{proof}

\section{Structure of $\Sel(E^d)$}

In this section we describe the structure of $\Sel(E^d)$ in the case when $|d|$ is prime. We prove the following proposition.

\begin{proposition}\label{prop:sha_trivial}
For prime $p\ne 2, 3, 13$, let $d=\pm p$ be such that $\legendre{d}{13}=1$ and $w(E^d)=-1$.
\begin{enumerate}
	\item [a)] If $d\in T$ (i.e. $d\equiv 1 \pmod{8}$ if $d>0$ or $d\equiv 5,7 \pmod{8}$ if $d<0$), then $\dim_{\FF_2}\Sel(E^d)=5$. More precisely,  if $d>0$, then $\Sel(E^d)$  is generated by torsion classes, $H^d$, $H_1^d$ and $H_2^d$. If $d<0$, then $\Sel(E^d)$ is generated by torsion classes, $H^d$, $F_1^{-d}$, and $F_2^{-d}$ if $d\equiv 7 \pmod{8}$ or $H_1^d$ if $d\equiv 5 \pmod{8}$. 
	\item[b)] If $d \notin T$, then we have that $\dim_{\FF_2}\Sel(E^d)=3$.
\end{enumerate}
\end{proposition}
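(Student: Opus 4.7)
The approach is explicit $2$-descent. Since $E$ (hence $E^d$) has full rational $2$-torsion $\{O, (9d,0), (8d,0), (-18d,0)\}$, the standard descent map embeds
$$\Sel(E^d) \hookrightarrow (\QQ^\times/\QQ^{\times 2})^2,$$
and representatives can be taken supported on the primes of bad reduction of $E^d$, namely $\{2,3,13,p\}$ together with sign, so the ambient $\FF_2$-vector space is at most $10$-dimensional. A pair $(\delta_1,\delta_2)$ lies in $\Sel(E^d)$ exactly when the associated $2$-covering of $E^d$ is everywhere locally solvable.

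The calculation then proceeds by determining, at each place $\ell\in\{2,3,13,p,\infty\}$, the local image
$$W_\ell:=\mathrm{im}\bigl(E^d(\QQ_\ell)/2E^d(\QQ_\ell)\to H^1(\QQ_\ell,E^d[2])\bigr),$$
which is a maximal isotropic subspace of $H^1(\QQ_\ell,E^d[2])$ under the local Tate pairing. Under the hypotheses $\legendre{p}{13}=1$ and $w(E^d)=-1$, the local images at $\ell=3,13,p,\infty$ are straightforward to pin down from the reduction type of $E^d$ at each prime (multiplicative at $3$ and $13$, additive with potentially good reduction at $p$, and the sign conditions at $\infty$), while the image at $\ell=2$ depends sensitively on $d\bmod 8$. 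Intersecting the pullbacks of the $W_\ell$ in $H^1(\QQ,E^d[2])$ (equivalently, invoking the Greenberg--Wiles global duality formula) yields the dimension of $\Sel(E^d)$, and a careful bookkeeping shows it comes out to $5$ exactly when $d\in T$ and to $3$ otherwise.

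To finish part (a), I would verify that the claimed generators actually lie in $\Sel(E^d)$ and are independent. The two nontrivial $2$-torsion classes give a $2$-dimensional subspace. Proposition \ref{prop:ELS_quartics} identifies, in each congruence class of $d\in T$, precisely three quartics among $\{H^d,H_1^d,H_2^d,F_1^{-d},F_2^{-d}\}$ that are everywhere locally solvable (matching the generators asserted for each case); combined with Proposition \ref{prop:ELS} for $H^d$ itself, this supplies three additional Selmer classes. Their independence modulo the torsion image follows by comparing their $(\delta_1,\delta_2)$-invariants, whose prime contents at $\{2,3,13,p\}$ are pairwise distinct. This gives $\dim_{\FF_2}\Sel(E^d)\ge 5$, matching the upper bound. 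For part (b), the local analysis yields $\dim\Sel(E^d)\le 3$, attained by torsion together with $H^d$ (still ELS by Proposition \ref{prop:ELS}).

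The main obstacle is the $2$-adic computation of $W_2$: at $\ell=2$, $E^d$ has additive reduction with nontrivial wild part, and the local image depends on $d\bmod 8$. This is exactly what produces the dichotomy between $d\in T$ and $d\notin T$, and careful tracking of these $2$-adic conditions is where most of the technical work lies. By contrast, the analysis at $\ell=p$ is comparatively easy because the underlying curve $E/\QQ_p$ has good reduction, so the local image is controlled by $E(\QQ_p)[2]$ and by the splitting of the $2$-division polynomial over $\QQ_p$, which the hypothesis $\legendre{p}{13}=1$ forces to split completely.
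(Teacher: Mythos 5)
Your lower-bound argument (exhibit the listed quartic classes, check they are ELS via Proposition \ref{prop:ELS_quartics}, and verify independence of their $(\delta_1,\delta_2)$-invariants) matches the paper, which does exactly this via Lemma \ref{lem:independence}. The problem is the upper bound, where your proposal contains a step that does not work as stated and where the entire content of the proposition is asserted rather than proved. Invoking the Greenberg--Wiles formula does not ``yield the dimension of $\Sel(E^d)$'': for the self-dual local conditions $W_\ell$ the formula compares the Selmer group with its own dual and gives no information, and ``intersecting the pullbacks of the $W_\ell$'' is simply the definition of the Selmer group, not a computation of it. The paper's way around this is to compare relaxed and strict Selmer groups $\mathcal{S}_{\T}\subset\Sel(E^d)\subset\mathcal{S}^{\T}$ for a set $\T$ outside of which the local conditions of $E^d$ agree with those of $E$ (or $E^{-1}$); global duality then controls only the \emph{difference} $\dim\mathcal{S}^{\T}-\dim\mathcal{S}_{\T}$, and the actual input is an explicit identification of $\mathcal{S}_{\T}$ — namely $\mathcal{S}_{\T}=\Sel(E)$ (computed to be $3$-dimensional from $E(\QQ)$, using that $p$ splits completely in the fields of definition of halves of the generators) when $d\in T$, and $\mathcal{S}_{\T}=0$ when $d\notin T$. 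Your proposal contains no substitute for this global step; ``a careful bookkeeping shows it comes out to $5$'' is precisely the statement to be proved.

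Two further points. First, your description of the place $v=p$ is off: the $2$-division polynomial $(x-9)(x-8)(x+18)$ already splits over $\QQ$, so $\legendre{p}{13}=1$ has nothing to do with it (that condition governs local solvability of $H^d$ at $p$, Proposition \ref{prop:ELS}); what actually matters at $v=p$ is that $W_p$ is the image of the $2$-torsion and meets the unramified classes trivially, so membership of the $p$-unramified generators is a splitting condition on $p$ in their fields of definition — the congruence conditions defining $T$ and the subcases $p\equiv 5,7\pmod 8$ enter exactly there. Second, in case (b) the paper's local analysis (at $v=p$ and then $v=3$) only forces $\dim_{\FF_2}\Sel(E^d)\le 4$; the conclusion $=3$ requires the $2$-parity theorem $(-1)^{\dim_{\FF_2}\Sel(E^d)}=w(E^d)=-1$ together with the lower bound $\ge 3$. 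Your claim that ``the local analysis yields $\dim\Sel(E^d)\le 3$'' skips this parity input, and there is no evidence the local conditions alone suffice to reach $3$ directly.
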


Since $E$ has full $2$-torsion over $\QQ$, each class in $H^1(\QQ,E[2])$ can be identified with an element of $(\QQ^\times/\QQ^{\times 2})^3$ in the following way. Denote by $P_1=(8,0),P_2=(-18,0)$ and $P_3=(9,0)$ nontrivial elements in $E[2]$, by $e_2:E[2]\times E[2] \rightarrow \mu_2$ the Weil pairing (hence $e_2(P_i,P_j)=-1$ if and only if $i\ne j$), and by $\omega:E[2] \rightarrow \Hom(E[2],\mu_2^3)$, $T \mapsto (P_i \mapsto e_2(T,P_i))$ the group homomorphism induces by $e_2$. For each class $F \in H^1(\QQ,E[2])$, we denote by $\omega_*(F)$ the pushforward of $\omega$ from $H^1(\QQ,E[2])$ to $H^1(\QQ,\mu_2^3)\cong H^1(\QQ,\mu_2)^3\cong(\QQ^\times/\QQ^{\times 2})^3$ where the last isomorphism is given by the Kummer map sending $\alpha \in \QQ^\times/\QQ^{\times 2}$ to $\xi \in H^1(\QQ,\mu_2)$ such that $\xi(\sigma) = \frac{\sqrt{\alpha}^\sigma}{\sqrt{\alpha}}$ for every $\sigma \in \GalQ$. One has that $\omega_*(F)=(a_1,a_2,a_3)$ is equivalent to $F(\sigma)=\chi_{a_1}(\sigma)P_1+\chi_{a_2}(\sigma)P_2$, for all $\sigma \in \GalQ$, and $a_1 a_2 a_3 \in \QQ^{\times 2}$, where, for $a\in \QQ$. Here, we denote by $\chi_a$ the nontrivial character of $\QQ(\sqrt{a}))$ with values in $\ZZ/2\ZZ$ (if $\QQ(\sqrt{a})=\QQ$ then $\chi_{a}$ is trivial). It follows that $F$ is defined over $\QQ(\sqrt{a_1}, \sqrt{a_2})$.

We start with the following standard lemma.

\begin{lemma} \label{lem:twisted_classes_w}
	For elliptic curve $\tilde{E}:y^2=(x-a_1)(x-a_2)(x-a_3)$, where $a_1,a_2,a_3\in \QQ$, let $F$ be a quartic $y^2=g(x)$, $g(x)\in \ZZ[x]$, isomorphic (over $\overline{\QQ}$) to $\tilde{E}$,  which represents an element in $H^1(\QQ,\tilde{E}[2])$ (the quartic is not necessarily everywhere solvable, i.e. the element of the $2$-Selmer group). For $d\in \ZZ$ let $F^d$ be the quadratic twist of $F$, thus representing the element in $H^1(\QQ,\tilde{E}^d[2])$. After identifying $H^1(\QQ,\tilde{E}[2]) \cong H^1(\QQ,\tilde{E}^d[2])$, we have
	$$\omega_*(F)=\omega_*(F^d).$$
\end{lemma}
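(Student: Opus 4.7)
The plan is to directly compute Galois $1$-cocycles representing $F$ and $F^d$ and verify that they coincide under the canonical identification $H^1(\QQ, \tilde{E}[2]) \cong H^1(\QQ, \tilde{E}^d[2])$. First, since $a_1, a_2, a_3 \in \QQ$, both $\tilde{E}[2]$ and $\tilde{E}^d[2]$ consist of $O$ together with the three points $(a_i, 0)$, each with trivial Galois action, and the bijection $(a_i, 0) \leftrightarrow (a_i, 0)$ is a Galois-equivariant isomorphism that preserves the Weil pairing (which in the full rational $2$-torsion case depends only on pairs of $x$-coordinates). This is precisely the identification that makes $\omega_*$ compatible on both sides.

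Next, I would fix an isomorphism $\phi: F \to \tilde{E}$ over $\overline{\QQ}$ chosen so that $\phi \circ \iota_F = \iota_{\tilde{E}} \circ \phi$ (possible because the $2$-cover structure matches up the hyperelliptic involutions of $F$ and $\tilde{E}$). The class of $F$ is then represented by $\sigma \mapsto \phi^\sigma \circ \phi^{-1}$, which acts on $\tilde{E}$ as translation by some $T(\sigma) \in \tilde{E}[2]$. The ``untwisting'' maps $\psi: F \to F^d$ and $\chi: \tilde{E} \to \tilde{E}^d$, both given by $(x, y) \mapsto (x, y/\sqrt{d})$, are defined over $\QQ(\sqrt{d})$, and $\chi$ is a group isomorphism (it is the identity in coordinates $Y = y$ on $\tilde{E}$ and $Y = y\sqrt{d}$ on $\tilde{E}^d$). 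The composition $\phi^d := \chi \circ \phi \circ \psi^{-1}: F^d \to \tilde{E}^d$ is an $\overline{\QQ}$-isomorphism whose associated cocycle represents $[F^d]$.

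The central computation is to check that for every $\sigma \in \GalQ$,
$$(\phi^d)^\sigma \circ (\phi^d)^{-1} = \chi \circ (\phi^\sigma \circ \phi^{-1}) \circ \chi^{-1}.$$
When $\sigma$ fixes $\sqrt{d}$ this is immediate, since $\chi^\sigma = \chi$ and $\psi^\sigma = \psi$. When $\sigma(\sqrt{d}) = -\sqrt{d}$, one has $\chi^\sigma = \iota_{\tilde{E}^d} \circ \chi$ and $\psi^\sigma = \iota_{F^d} \circ \psi$; expanding $(\phi^d)^\sigma \circ (\phi^d)^{-1}$ produces two extra copies of the hyperelliptic involution which cancel upon using the three identities $\psi^{-1} \circ \iota_{F^d} \circ \psi = \iota_F$, $\phi \circ \iota_F = \iota_{\tilde{E}} \circ \phi$, and $\chi \circ \iota_{\tilde{E}} = \iota_{\tilde{E}^d} \circ \chi$.

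Finally, since $\chi$ is a group homomorphism, conjugation by $\chi$ takes translation by $T(\sigma)$ to translation by $\chi(T(\sigma)) \in \tilde{E}^d[2]$; and on $2$-torsion $\chi$ acts by $(a_i, 0) \mapsto (a_i, 0)$, which is exactly the identification underlying the isomorphism of cohomology groups. Hence the cocycles for $F$ and $F^d$ coincide under this identification, so $\omega_*(F) = \omega_*(F^d)$. The main technical point will be the case $\sigma(\sqrt{d}) = -\sqrt{d}$, where the cancellation of hyperelliptic involutions must be tracked carefully; however, the cancellation is clean once $\phi$ is arranged to intertwine $\iota_F$ with $\iota_{\tilde{E}}$.
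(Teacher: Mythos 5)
Your proof is correct in substance, but it takes a genuinely different route from the paper's. The paper argues entirely through the explicit descent interpretation of $\omega_*$: the equality $\omega_*(F)=(q_1,q_2,q_3)$ means $F$ is $\QQ$-isomorphic to the intersection of quadrics $q_i y_i^2=x-a_i$, and the twisting map $(x,y_1,y_2,y_3)\mapsto(dx,\sqrt{d}\,y_1,\sqrt{d}\,y_2,\sqrt{d}\,y_3)$ turns this into $q_i y_i^2=x-da_i$, i.e.\ the $2$-covering of $\tilde{E}^d$ with the \emph{same} triple $(q_1,q_2,q_3)$ --- a two-line coordinate computation with no cocycles. You instead work at the level of torsors, comparing the cocycles $\sigma\mapsto\phi^\sigma\circ\phi^{-1}$ and $\sigma\mapsto(\phi^d)^\sigma\circ(\phi^d)^{-1}$ directly; your central identity checks out, the two hyperelliptic involutions do cancel in the case $\sigma(\sqrt{d})=-\sqrt{d}$ via the three intertwining relations you list, and conjugation by the group isomorphism $\chi$ carries translation by $T(\sigma)$ to translation by its image under the canonical identification of the $2$-torsion, which preserves the Weil pairing and hence commutes with $\omega_*$. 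Your argument is more conceptual and proves something slightly stronger (equality of the classes in $H^1(\QQ,\tilde{E}[2])$ itself, independently of the quadric model), at the cost of length. One point to tighten: intertwining the hyperelliptic involutions alone does not force $\phi^\sigma\circ\phi^{-1}$ to be translation by $2$-torsion --- a priori it could be of the form $P\mapsto -P+Q$ --- so you must also require compatibility with the $\QQ$-rational covering map, $[2]\circ\phi=\pi$, which is what actually pins the cocycle into $\tilde{E}[2]$. The standard choice $\phi(P)=[P-Q_0]$ with $Q_0$ a Weierstrass point of $y^2=g(x)$ satisfies both normalizations simultaneously, and since your case-2 computation genuinely uses $\phi\circ\iota_F=\iota_{\tilde{E}}\circ\phi$, it is worth stating this choice explicitly; with that clarification the argument is complete.
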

\begin{proof}
	The claim follows directly from the interpretation of the map $\omega_*$  in terms of two-descent theory. If $\omega_*(F)=(q_1,q_2,q_3)$, then $F$ is isomorphic (over $\QQ$) to
	the curve 
	\begin{align*}
		q_1 y^2&=x-a_1,\\
		q_2 y^2&=x-a_2,\\
		q_3 y^2&=x-a_3,
	\end{align*}
	while its twist over $\QQ(\sqrt{d})$ is given by
$$ q_1 y^2=x-d a_1, \quad q_2 y^2 =x-d a_2, \quad q_3 y^2=x-d a_3.$$
where the isomorphism $F\rightarrow F^d$ maps $(x,y_1,y_2,y_3)\mapsto (dx,\sqrt{d}y_1,\sqrt{d}y_2,\sqrt{d}y_3)$.
Since $\tilde{E}^d$ is isomorphic to $y^2=(x-d a_1)(x-d a_2)(x-d a_3)$, we recognize from the above that $\omega_*(F^d)=(q_1,q_2,q_3)$ (we identified $(a_i,0)$ with $(d a_i,0)$), and the claim follows.
\end{proof}
For the proof of Proposition \ref{prop:sha_trivial}, we need to introduce three more quartics.
\begin{align*}
		H_3&: y^2 = 25x^4 + 48x^3 - 114x^2 - 144x + 225\in Sel(E^3),\\
		F_3&: y^2=-71x^4 - 336x^3 - 538x^2 - 336x - 71 \in \Sel(E^{-1}),\\
		F_4&: y^2=-5x^4 + 76x^3 - 168x^2 - 296x - 92 \in \Sel(E^{-3}).
\end{align*}

Recall that quadratic twist $E^d$ has the  Weierstrass model $E^d:y^2=(x-8d)(x-9d)(x+18d)$.

Next, we prove linear independence of classes needed for the proof of Proposition \ref{prop:sha_trivial}.

\begin{lemma} \label{lem:independence} For $d\in \ZZ$, $|d|$ prime, and $|d|\notin \{2,3,13\}$, denote by $Q_1=(8d,0)$ and $Q_2=(-18d,0)$ elements in $E^d[2]$ which correspond to $P_1$ and $P_2$ under the natural isomorphism $E[2] \cong E^d[2]$, and by $\kappa:E^d(\QQ)/2E^d(\QQ)\rightarrow \Sel(E^d)\subset H^1(\QQ,E^d[2])$ the Kummer map. We have that
\begin{align*}
	\omega_*(H^d)&=(13,13,1), \omega_*(\kappa(Q_1))=(26d,-26,-d),\omega_*(\kappa(Q_2))=(78,-26d,-3d),\\
	\omega_*(H_1^d)&=(3,1,3), \omega_*(H_2^d)=(2,-2,-1),\omega_*(H_3^d)=(6,-6,1),\\
	\omega_*(F_1^d)&=(-2,-2,1),\omega_*(F_2^d)=(-3,-1,3), \omega_*(F_3^d)=(6,2,3),\omega_*(F_4^d)=(6,6,1).
\end{align*}
Moreover,
	\begin{enumerate}
		\item [a)] if $d>0$, the the classes $\omega_*(F)$, for $F \in \{\kappa(Q_1),\kappa(Q_2),H^d,H_1^d,H_2^d,H_3^d,F_3^{-d}\}$ are (multiplicatively) independent in $(\QQ^\times/\QQ^{\times 2})^3$ and locally solvable at infinity,
		\item [b)] if $d<0$, the classes $\omega_*(F)$, for $F \in \{\kappa(Q_1),\kappa(Q_2),H^d,H_1^{d},F_1^{-d},F_3^{-d},F_4^{-3d}\}$ are (multiplicatively) independent in $(\QQ^\times/\QQ^{\times 2})^3$, and locally solvable at infinity.
	\end{enumerate}
\end{lemma}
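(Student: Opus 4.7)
\medskip

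\textbf{Proof plan.}

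The computation of the triples $\omega_*(F)$ splits into two pieces. For the Kummer images $\kappa(Q_1)$ and $\kappa(Q_2)$, I would invoke the classical formula from two-descent theory: for $E^d:y^2=(x-e_1)(x-e_2)(x-e_3)$ with $e_1=8d,\ e_2=-18d,\ e_3=9d$, the image $\kappa(Q_i)=\kappa((e_i,0))$ in the Kummer coordinates indexed by $P_1,P_2,P_3$ has $i$-th entry equal to $\prod_{j\ne i}(e_i-e_j)$ and $j$-th entry equal to $e_i-e_j$ for $j\ne i$. Substituting and reducing modulo squares, and accounting for the swap of the first two coordinates implicit in the Weil-pairing description of $\omega_*$, yields the stated expressions. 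For each quartic $F\in\{H,H_1,H_2,H_3,F_1,F_2,F_3,F_4\}$, Lemma~\ref{lem:twisted_classes_w} reduces to $d=1$ (as a $2$-cover of the correct base curve); then $\omega_*(F)$ is read off by putting $F$ in the standard ``three simultaneous conics'' form $q_i y_i^2=x-e_i$ with $q_1q_2q_3\in\QQ^{\times 2}$, either by an explicit change of variables or by finding a rational point on the twist that pins down the triple $(q_1,q_2,q_3)$.

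Local solvability at infinity is an inspection: each of the polynomials $g(x)$ defining a quartic $y^2=g(x)$ or its twist $dy^2=g(x)$ assumes values of the required sign for some real $x$ (positive leading coefficients suffice when $d>0$, while for the $d<0$ cases—$F_1^{-d}$, $F_3^{-d}$, $F_4^{-3d}$—the leading coefficient of $g$ is negative so that $g(x)<0$ for $|x|$ large, ensuring $(-p)y^2=g(x)$ has a real solution). So this is a short case check.

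The main step is linear independence. Since only the primes in $S=\{-1,2,3,13,p\}$ appear in the entries, each $\omega_*(F)$ defines a vector in the $\FF_2$-space $(\oplus_{\ell\in S}\FF_2)^3\cong\FF_2^{15}$. One then sets up the $7\times 15$ matrix of these vectors and shows it has full row rank by triangular elimination. The key observation is that only $\kappa(Q_1)$ and $\kappa(Q_2)$ have nonzero $p$-component, and their $p$-projections are the independent pair $(1,0,1)$, $(0,1,1)$. Hence it is enough to verify independence of the remaining five classes in $(\QQ_S^\times/\QQ_S^{\times 2})^3$ for $S=\{-1,2,3,13\}$. In part (a) the class $\omega_*(H^d)$ is the unique one with nonzero $13$-entry, $\omega_*(H_1^d)$ is the unique one left with a $3$-entry only at coordinates $1$ and $3$, and $\omega_*(H_2^d), \omega_*(H_3^d), \omega_*(F_3^{-d})$ can be distinguished via the coordinate-wise supports at $\{-1,2\}$ in position~2. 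An analogous diagonalization works for part (b), with $\omega_*(F_1^{-d})$ being the unique surviving class involving $-1$, $\omega_*(F_4^{-3d})$ the unique remaining one with a $3$ in position~2, and so on.

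The main obstacle is purely bookkeeping, not conceptual: producing the triples $\omega_*(F)$ for the eight quartics requires a careful, case-by-case $2$-descent computation (ideally outsourced to a computer algebra check), and then one has to be painstaking in verifying the triangular structure of the $7\times 15$ matrix over $\FF_2$. Once these explicit triples are correct, the remaining independence and real-solvability verifications are essentially mechanical.
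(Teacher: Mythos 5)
Your overall strategy is essentially the paper's: compute the triples $\omega_*(F)$ explicitly (the paper finds a halving point $R_1$ with $2R_1=Q_1$ and reads off the cocycle $\sigma\mapsto R_1^\sigma-R_1$, whereas you invoke the classical two-descent formula $\kappa((e_i,0))$ — these agree, e.g.\ $(-26,26d,-d)$ becomes $(26d,-26,-d)$ after the coordinate swap you mention), then derive independence from the fact that $\kappa(Q_1)$ and $\kappa(Q_2)$ are the only classes whose entries involve $p$ (using $p\notin\{2,3,13\}$), reducing to a finite $\FF_2$-rank check on the remaining five classes supported on $\{-1,2,3,13\}$. This is exactly the paper's observation that "the classes will be independent unless $d$ is divisible only by $2,3$ and $13$", and your conclusion is correct.

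Two details in your sketch are wrong as stated, though both are repairable. First, the real-solvability analysis has the signs backwards: when $d<0$ the twisting parameters $-d$ and $-3d$ are \emph{positive}, so $F_1^{-d}$, $F_3^{-d}$, $F_4^{-3d}$ only need their quartics to take a positive value somewhere (and $F_1$ has leading coefficient $+11$, not negative). The cases that genuinely require an argument are $H^d$ and $H_1^d$ with $d<0$: their quartics have positive leading coefficients, so one must show they take \emph{negative} values, which holds because $(x^2-x-3)(x^2+2x-12)$ and $4x^4-56x^2+169$ each have real roots — precisely the observation the paper makes. Your stated criterion would fail on exactly these two classes. Second, in part (a) the elimination step "$\omega_*(H_1^d)$ is the unique one left with a $3$-entry only at coordinates $1$ and $3$" is false: $\omega_*(F_3^{-d})=(6,2,3)$ has the same $3$-support $\{1,3\}$ as $\omega_*(H_1^d)=(3,1,3)$. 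The rank is still full (eliminate $H_2^d$ and $H_3^d$ first via their $-1$-components, then note $\omega_*(H_1^d)\cdot\omega_*(F_3^{-d})=(2,2,1)\neq(1,1,1)$), so independence survives, but the pivoting order must be changed.
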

\begin{proof}
Using Magma \cite{Magma}, we can easily compute the values of $\omega_*(F^d)$ for quartics $F$ from \ref{eq:quartics} as they don't depend on $d$ by Lemma \ref{lem:twisted_classes_w}.

We can also compute classes of torsion points explicitly. For example, for $Q_1=(8d,0)\in E^d(\QQ)$, one can check that $2R_1=Q_1$, where $R_1=(\frac{1}{2}r^2-\frac{9d}{2},\frac{1}{2}r^3-\frac{25d}{2})$, with $r^4-50d r^2+3^6 d^2=0$. Here $\QQ(r)=\QQ(\sqrt{-d},\sqrt{-26})$, and by inspection
one obtains that $R_1^\sigma -R_1=\chi_{26d}(\sigma)Q_1+\chi_{-26}(\sigma)Q_2$, thus $\omega_*(\kappa(Q_1))=(26d,-26,-d)$. Similarly, one computes $\omega_*(\kappa(Q_2))$. 

 The existence of real points on quartic (which determine local solvability at infinity) can be checked for each quartic separately.  

If $d>0$, it is not hard to see that the classes will be independent unless $d$ is divisible only by $2,3$ and $13$.
In particular, for squarefree $d$, we compute that this happens for $\{1, 2, 3, 6, 13, 26, 39, 78\}$, thus the claim in $a)$ follows. The claim in b) is proved in a similar way.
	
\end{proof}

We have the following proposition as a consequence of the previous lemma.

\begin{proposition}\label{prop:infinity}
	If $d\in \ZZ$ is square free integer such that $H^d(\QQ) \ne \emptyset$, then $H^d(\QQ)$ is infinite.
\end{proposition}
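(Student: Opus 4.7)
The plan is to reduce the claim $H^d(\QQ)$ is infinite to showing that $E^d$ has positive Mordell--Weil rank over $\QQ$. Since $H^d$ becomes canonically isomorphic to $E^d$ over $\QQ$ once one fixes a rational point, the two sets $H^d(\QQ)$ and $E^d(\QQ)$ have the same cardinality. From the exact sequence \eqref{eq:exact} and $H^d(\QQ)\ne \emptyset$, there is $P \in E^d(\QQ)$ with $[H^d] = \kappa(P)$, and the problem becomes showing that $P$ is non-torsion.

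I would work entirely in $(\QQ^\times/\QQ^{\times 2})^3$ through the map $\omega_*$. The computations in Lemma \ref{lem:independence}, combined with Lemma \ref{lem:twisted_classes_w}, give
\[
\omega_*(H^d)=(13,13,1),\ \omega_*(\kappa(Q_1))=(26d,-26,-d),\ \omega_*(\kappa(Q_2))=(78,-26d,-3d),
\]
and $\omega_*(\kappa(Q_1+Q_2)) = (3d,d,3)$. These individual formulae make sense for every squarefree $d$ (only the linear independence statement in Lemma \ref{lem:independence} requires $|d|$ prime), so I may use them without restriction.

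Now suppose $P$ is torsion. The crucial input is that $E^d$ has no rational $2^k$-torsion for $k \ge 2$, for any squarefree $d$. Indeed, a rational point $R$ with $2R = Q_i$ for some nontrivial $Q_i \in E^d[2](\QQ)$ would force $\kappa(Q_i) = 0$ in $H^1(\QQ,E^d[2])$; but the three classes above are visibly nontrivial in $(\QQ^\times/\QQ^{\times 2})^3$ because $-26$, $78$, and $3$ are not rational squares. By induction there is no rational $2^k$-torsion for $k \ge 2$, so $E^d(\QQ)_{\mathrm{tors}} = E^d[2](\QQ)\oplus M$ with $|M|$ odd. Since $\kappa$ lands in an exponent-$2$ group, the odd-order part dies, and $\omega_*(\kappa(P))$ must lie in $\{(1,1,1), (26d,-26,-d), (78,-26d,-3d), (3d,d,3)\}$. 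A direct coordinate-wise check shows that matching any of these with $(13,13,1)$ would require one of $13$, $-2$, $6$, or $3$ to be a rational square, which is false. This contradicts $\omega_*(\kappa(P)) = \omega_*(H^d) = (13,13,1)$, so $P$ has infinite order and both $E^d(\QQ)$ and $H^d(\QQ)$ are infinite.

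The only step that is not pure bookkeeping modulo squares is the ruling out of rational $2^k$-torsion; but this in turn is immediate from the nontriviality of $\kappa(Q_i)$, which is already encoded in the $\omega_*$-values we had to compute anyway. I expect no further obstacle.
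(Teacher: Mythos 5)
Your proof is correct, and it takes a genuinely different route from the paper's at the decisive step. Both arguments start identically: if $H^d(\QQ)$ were finite then $E^d$ would have rank zero, so the class of $H^d$ would be $\kappa(P)$ for a torsion point $P$. The paper then passes to the $4$-division point $Q$ with $2Q=P$, notes that the shape of the cocycle forces $Q$ to be defined over a quadratic field, computes the finite list $d\in\{-26,-3,-1,1,3,26\}$ of twists admitting such a point, and disposes of those cases by direct verification. You instead first rule out rational $2^k$-torsion for $k\ge 2$ uniformly in $d$ (because $-26$, $78$ and $3$ are not squares, so each $\kappa(Q_i)$ is visibly nontrivial), conclude that the torsion image under $\kappa$ is exactly $\{(1,1,1),(26d,-26,-d),(78,-26d,-3d),(3d,d,3)\}$ under $\omega_*$, and check coordinate-wise that none of these can equal $\omega_*(H^d)=(13,13,1)$, since that would force one of $13$, $-2$, $6$, $3$ to be a rational square. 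This buys you a uniform argument with no exceptional cases to verify by hand, and it also makes explicit the ``no rational $4$-torsion'' fact that the paper uses implicitly when it asserts $H^d\in\kappa(E^d[2])$; your observation that the explicit formulas for $\omega_*(\kappa(Q_i))$ in Lemma \ref{lem:independence} hold for arbitrary squarefree $d$ (only the independence assertion needs $|d|$ prime) is the right justification for invoking them here. The only point worth phrasing slightly more carefully is the identification of $H^d(\QQ)$ with $E^d(\QQ)$, which holds only up to the finitely many points at infinity or singular points of the quartic model, but that is harmless for a statement about infinitude.
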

\begin{proof}
	Assume that for some $d\in \ZZ$, $H^d(\QQ) \ne \emptyset$ and  $H^d(\QQ)$ is finite.
	It follows that the rank of Mordell-Weil group of $E^d(\QQ)$ is zero, hence $H^d$ as an element of $2$-Selmer group $\Sel(E^d)$ is in the image of the two torsion $E^d[2]$ under the map $E^d(\QQ)/2E^d(\QQ) \hookrightarrow \Sel(E^d)$ from \eqref{eq:exact}. More precisely, there is a point of order $4$, $Q\in E^d[4]$, such that $H^d$ corresponds to the cocycle $\sigma \mapsto Q^\sigma-Q$. It follows from Lemma \ref{lem:independence} that the image of this cocycle is 
	of order $2$ which implies that $Q$ is defined over quadratic field. There are only finitely many $d's$ that have a point of order $4$ defined over quadratic field.
	Note that if $x_0$ is an $x$-coordinate of point of order $4$ on $E$ (it is defined over quadratic field), then $d\cdot x_0$ is an $x$-coordinate of point of order $4$ on $E^d$. Moreover, if $E_d:y^2=f_d(x)=(x-8d)(x-9d)(x+18d)$, then $f_d(d x_0)=d^3\cdot f_1(x_0)$ is a square in $\QQ(x_0)$ if and only if $d \cdot f_1(x_0)$ is a square. One can check that this is the case if and only if $d=\{-26,-3,-1,1,3,26\}$. The proposition follows after verifying the claim for these special cases.
\end{proof}

To obtain an upper bound for the size of $2$-Selmer group, we will use the method and terminology from the paper of Mazur and Rubin \cite{Mazur_Rubin_Hilbert}[Section 3] (see also \cite{Kramer, Boxer_Diao}).

\begin{definition}
	Suppose $\tilde{E}$ is an elliptic curve over $\QQ$. For every place $v$ of $\QQ$, let $H_f(\QQ_v,\tilde{E}[2])$ denote the image of the Kummer map
	$$\tilde{E}(\QQ_v)/2\tilde{E}(\QQ_v) \rightarrow H^1(\QQ_v,\tilde{E}[2]).$$
	The $2$-Selmer group $\Sel(\tilde{E})$ is the $\FF_2$-vector space defined by the exactness of the sequence
	$$0\rightarrow \Sel(\tilde{E})\rightarrow H^1(\QQ,\tilde{E}[2])\rightarrow \bigoplus_v H^1(\QQ_v,\tilde{E}[2])/H^1_f(\QQ_v,\tilde{E}[2]).$$
	We say that $2$-Selmer group $\Sel(\tilde{E})$ is cut out by the local conditions $H_f(\QQ_v,\tilde{E}[2])$.
\end{definition}
The following lemma describes the size of local conditions.

\begin{lemma}\label{lem:size_Hf}
Let $v$ be a finite rational place and $d$ an odd squarefree integer. We have
\[
\dim_{\FF_2}H_f^1(\QQ_v,E^d[2]) = \begin{cases}
	2& \textrm{ if } v \ne 2\\
	3& \textrm{ if } v=2.\\
\end{cases}
\]
\end{lemma}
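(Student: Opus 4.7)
The plan is to identify $H^1_f(\QQ_v, E^d[2])$ with $E^d(\QQ_v)/2E^d(\QQ_v)$ via the local Kummer map (which is injective by definition), and then to compute the dimension of the source using a standard local index formula.

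First, I would record that $E$ has full rational $2$-torsion, namely $E[2] = \{O, (8,0), (9,0), (-18,0)\}$, so every quadratic twist $E^d$ also has full rational $2$-torsion, given by $\{O, (8d,0), (9d,0), (-18d,0)\}$. In particular, $\#E^d(\QQ_v)[2] = 4$ at every finite place $v$.

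Next, I would invoke the classical local index formula: for an elliptic curve $\mathcal{E}$ over $\QQ_v$ and an integer $m \ge 1$,
$$\frac{\#\mathcal{E}(\QQ_v)/m\mathcal{E}(\QQ_v)}{\#\mathcal{E}(\QQ_v)[m]} \;=\; |m|_v^{-1},$$
where $|\cdot|_v$ is the normalized $v$-adic absolute value (with $|p|_p = 1/p$). This is proved by applying the snake lemma to multiplication by $m$ on the filtration $\hat{\mathcal{E}}(\mathfrak{m}_v) \subset \mathcal{E}^0(\QQ_v) \subset \mathcal{E}(\QQ_v)$: the contributions from the Tamagawa component group $\mathcal{E}(\QQ_v)/\mathcal{E}^0(\QQ_v)$ and from the finite group $\tilde{\mathcal{E}}_{\rm ns}(\FF_v)$ cancel in the ratio, leaving only the contribution from the formal group, which equals $|m|_v^{-1}$ because a sufficiently deep part of the formal group is isomorphic as a topological abelian group to $(\ZZ_v, +)$ via the formal logarithm. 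Crucially, the right-hand side depends neither on the reduction type of $\mathcal{E}$ at $v$ nor on $d$.

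Applying the formula with $\mathcal{E} = E^d$ and $m = 2$, and using $\#E^d(\QQ_v)[2] = 4$, gives $\#E^d(\QQ_v)/2E^d(\QQ_v) = 4$ when $v \neq 2$ (since $|2|_v = 1$) and $\#E^d(\QQ_2)/2E^d(\QQ_2) = 8$ when $v = 2$ (since $|2|_2 = 1/2$). Taking $\dim_{\FF_2}$ of the image of the injective Kummer map yields the claimed values $2$ and $3$. The only nontrivial ingredient is the local index formula itself, which is entirely standard; no genuine obstacle arises.
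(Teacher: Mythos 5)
Your proof is correct, and it reaches the two dimensions by a more uniform route than the paper. For $v \neq 2$ the paper simply cites Lemma 2.2 of Mazur--Rubin to get $\dim_{\FF_2}H_f^1(\QQ_v,E^d[2])=\dim_{\FF_2}E^d(\QQ_v)[2]=2$, and at $v=2$ it carries out an explicit computation: it shows via the formal logarithm that $\mathcal{F}(4\ZZ_2)/2\mathcal{F}(4\ZZ_2)\cong \ZZ/2\ZZ$, checks $2\mathcal{F}(2\ZZ_2)=\mathcal{F}(4\ZZ_2)$ using $a_1=0$, and then climbs the filtration $E^d_1(\QQ_2)\subset E^d_0(\QQ_2)\subset E^d(\QQ_2)$, identifying explicit $2$-torsion generators of the graded pieces and using that the Tamagawa number at $2$ is $2$, to conclude $E^d(\QQ_2)/2E^d(\QQ_2)\cong(\ZZ/2\ZZ)^3$. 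You instead apply the general local index formula $\#\bigl(\mathcal{E}(\QQ_v)/m\mathcal{E}(\QQ_v)\bigr)=|m|_v^{-1}\,\#\mathcal{E}(\QQ_v)[m]$ together with the observation that $E^d$ has full rational $2$-torsion; this handles all finite places at once and is insensitive to the reduction type and the component group at $2$, which the paper's argument has to pin down by hand. The underlying mechanism is the same --- the factor $|2|_2^{-1}=2$ comes from the formal group, and the finite graded pieces cancel in the Euler-characteristic ratio --- but your version buys generality and avoids the curve-specific bookkeeping, while the paper's version, in exchange, exhibits the group $E^d(\QQ_2)/2E^d(\QQ_2)$ and its generators explicitly (information it reuses elsewhere). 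Both arguments are complete.
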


\begin{proof}
By Lemma 2.2 in \cite{Mazur_Rubin_Hilbert}, if $v  \nmid \, 2\infty$, then $\dim_{\FF_2}H_f^1(\QQ_v,E^d[2])=\dim_{\FF_2}E^d(\QQ_v)[2]=2$.

Following \cite[Chapter 4.]{Silverman_Arithmetic}, denote by $\mathcal{F}$ the formal group associated to the elliptic curve $E^d/\QQ_2$, and by $\mathcal{F}(2\ZZ_2)$ the group associated to that formal group. Theorem 6.4. b) in \cite{Silverman_Arithmetic} implies that $\mathcal{F}(4\ZZ_2)$ is isomorphic (via formal logarithm map) to the additive group $\hat{\mathbb{G}}_a(4\ZZ_2)$ which implies that $\mathcal{F}(4\ZZ_2)/2\mathcal{F}(4\ZZ_2)\cong \ZZ/2\ZZ$. On the other hand, since $\mathcal{F}(x,y)=x+y-a_1 xy-a_2(x^2 y+x y^2)+\cdots$, where $a_1$ and $a_2$ are the usual Weierstrass coefficients of $E^d$, it follows that $[2](x)=2x+O(x^3)$ (as $a_1=0$), thus $2\mathcal{F}(2\ZZ_2)=\mathcal{F}(4\ZZ_2)$. In particular, $\mathcal{F}(2\ZZ_2)/2\mathcal{F}(2\ZZ_2)\cong \ZZ/2\ZZ$.

If we denote by $E^d_1(\QQ_2)$ the subgroup of points in $E^d(\QQ_2)$ which reduce to the point at infinity modulo two, then it is well known that $E^d_1(\QQ_2) \cong \mathcal{F}(2\ZZ_2)$. Moreover, $E^d_0(\QQ_2)/E^d_1(\QQ_2)$, where $E^d_0(\QQ_2)$ is the subgroup of points of nonsingular reduction, is generated by two torsion point with odd $x$ coordinate. Finally, $E^d(\QQ_2)/E^d_0(\QQ_2)$ is generated by the point of order two with even $x$ coordinate (Tamagawa number of $E^d$ is two), and we have that $E^d(\QQ_2)/2E^d(\QQ_2)\cong (\ZZ/2\ZZ)^3$, so the claim follows.
\end{proof}

There is a natural identification of Galois modules $E[2]\cong E^d[2]$ - which is crucial for our argument. We identify point $(a,0)\in E(\QQ)$ with $(8a,0)\in E^d(\QQ)$ for $a\in \{8,9,-18\}$. It allows us to view $\Sel(E^d)$ as a subspace of the $H^1(\QQ,E[2])$, but defined by the different sets of local conditions $H^1_f(\QQ_v,E^d[2])\subset H^1(\QQ_v,E[2])$.

\begin{definition}
	If $\T$ is a finite set of places of $\QQ$, define relaxed $2$-Selmer group $\mathcal{S}^{\T}$ by the exactness of 
	$$0\rightarrow \mathcal{S}^{\T} \rightarrow H^1(\QQ,E[2]) \rightarrow \bigoplus_{v \notin \T} H^1(\QQ_v,E[2])/H^1_f(\QQ_v,E[2]),$$
	where the second arrow is induced by the sum of localization maps $H^1(\QQ,E[2]) \rightarrow H^1(\QQ_v,E[2])$.
\end{definition}

By definition $\Sel(E) \subset \mathcal{S}^{\T}$ for any $\T$. We will choose $\T$ such that $\Sel(E^d)\subset \mathcal{S}^{\T}$ holds as well.
For that we will need the following criteria for equality of local conditions after twist (see Lemma 2.10 and Lemma 2.11 in \cite{Mazur_Rubin_Hilbert}).

\begin{lemma}\label{lem:local_conditions}
	Let $\tilde{E}/\QQ$ be an elliptic curve. Let $v$ be a place of $\QQ$ and $d$ a squarefree integer. If at least one of the following conditions holds
	\begin{enumerate}
		\item [a)] $v$ splits in $\QQ(\sqrt{d})$,
		\item [b)] $v$ is a prime of good reduction of $\tilde{E}$ and $v$ is unramified in  $\QQ(\sqrt{d})/\QQ$,
	\end{enumerate}
then $H^1_f(\QQ_v,\tilde{E}[2])=H^1_f(\QQ_v,\tilde{E}^d[2])$.
	Moreover, if $\tilde{E}$ has good reduction at $v$, and $v$ is ramified in $\QQ(\sqrt{d})/\QQ$, then
	$$H^1_f(\QQ_v,\tilde{E}[2])\cap H^1_f(\QQ_v,\tilde{E}^d[2])=0.$$
\end{lemma}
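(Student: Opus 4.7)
The plan is to treat the three assertions separately, following the local arguments recorded in \cite{Mazur_Rubin_Hilbert}. For part (a), when $v$ splits in $\QQ(\sqrt{d})$ the integer $d$ is a square in $\QQ_v^{\times}$. Choosing $\sqrt{d}\in \QQ_v$, the map $(x,y)\mapsto (x,\sqrt{d}\,y)$ is a $\QQ_v$-isomorphism $\tilde{E}^d \xrightarrow{\sim} \tilde{E}$ which induces the identity on $2$-torsion under the canonical identification $\tilde{E}[2]=\tilde{E}^d[2]$. Functoriality of the Kummer map then identifies the two local conditions as the same subspace of $H^1(\QQ_v,\tilde{E}[2])$.

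For part (b), since $v$ is unramified in $\QQ(\sqrt{d})$, the twisting quadratic character $\chi_d$ is unramified at $v$, so $\tilde{E}^d$ also has good reduction at $v$. For an elliptic curve with good reduction at a nonarchimedean $v$, a standard criterion identifies the Kummer image with the unramified cohomology $H^1_{\mathrm{unr}}(\QQ_v,\tilde{E}[2])=\ker\!\bigl(H^1(\QQ_v,\tilde{E}[2])\to H^1(I_v,\tilde{E}[2])\bigr)$. Because quadratic twisting corresponds to tensoring by a character of order dividing $2$, which acts trivially on the $2$-torsion, the Galois modules $\tilde{E}[2]$ and $\tilde{E}^d[2]$ are canonically identified as $G_{\QQ_v}$-modules. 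Hence both local conditions equal the single subspace $H^1_{\mathrm{unr}}(\QQ_v,\tilde{E}[2])$.

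The substantive statement, and the one I expect to be the main obstacle, is the ``moreover'' clause. The plan is to exploit local Tate duality: the Weil pairing makes $\tilde{E}[2]$ self-dual, the induced cup-product pairing on $H^1(\QQ_v,\tilde{E}[2])$ is nondegenerate, and each Kummer image is maximal isotropic of the same dimension. Over the ramified quadratic extension $L_w=\QQ_v(\sqrt{d})$ the twist becomes trivial, giving an $L_w$-isomorphism $\tilde{E}^d\cong \tilde{E}$ under which both Kummer images restrict to the common Kummer image of $\tilde{E}$ over $L_w$. For $c\in H^1_f(\QQ_v,\tilde{E}[2])\cap H^1_f(\QQ_v,\tilde{E}^d[2])$, represent it by $P\in \tilde{E}(\QQ_v)$ and $P'\in \tilde{E}^d(\QQ_v)$; inside $\tilde{E}(L_w)$ the point $P$ is $\Gal(L_w/\QQ_v)$-invariant while $P'$ is anti-invariant, yet they must differ by an element of $2\tilde{E}(L_w)$. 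A descent argument using the good reduction of $\tilde{E}$ over $L_w$ (so that the kernel of reduction is a formal group and halving is controlled) then forces $P\in 2\tilde{E}(\QQ_v)$, i.e.\ $c=0$. Equivalently, a local Euler characteristic computation yields the dimension identity $\dim\!\bigl(H^1_f(\QQ_v,\tilde{E}[2])+H^1_f(\QQ_v,\tilde{E}^d[2])\bigr)=\dim H^1(\QQ_v,\tilde{E}[2])$, which, combined with the Lagrangian property from Tate duality, forces trivial intersection. The delicate step is verifying that the two Lagrangians are genuinely distinct precisely under the ramified hypothesis, which is the content of \cite[Lem.~2.11]{Mazur_Rubin_Hilbert}.
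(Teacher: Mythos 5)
The paper does not actually prove this lemma: it is imported directly from Lemmas 2.10 and 2.11 of \cite{Mazur_Rubin_Hilbert}, and the parenthetical citation is the entire ``proof''. Measured against that reference, your treatment of (a) and (b) is correct and is essentially the standard argument: in case (a) the twist is trivialized over $\QQ_v$ by an isomorphism inducing the identity on $2$-torsion, and in case (b) both Kummer images coincide with the unramified subgroup. One caveat on (b): identifying the Kummer image with $H^1_{\mathrm{unr}}(\QQ_v,\tilde{E}[2])$ requires $v\nmid 2$ (at $v=2$ the Kummer image has dimension $\dim_{\FF_2}\tilde{E}(\QQ_2)[2]+1$ and strictly contains the unramified classes), and Mazur--Rubin's Lemma 2.10 indeed carries the hypothesis $v\nmid 2$; the paper's statement silently drops it, harmlessly, since (b) is only ever invoked at odd primes of good reduction.

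Your sketch of the ``moreover'' clause is where the genuine gaps are. From $P-\phi(P')\in 2\tilde{E}(L_w)$ (with $\phi\colon\tilde{E}^d\to\tilde{E}$ the $L_w$-isomorphism) and the invariance, resp.\ anti-invariance, of $P$ and $\phi(P')$ under $\Gal(L_w/\QQ_v)$, averaging only yields $P\in N_{L_w/\QQ_v}\tilde{E}(L_w)+\tilde{E}(\QQ_v)[2]$, not $P\in 2\tilde{E}(\QQ_v)$. The remaining work --- identifying the intersection of the two local conditions with the image of the local norm group $N_{L_w/\QQ_v}\tilde{E}(L_w)$, showing that for a ramified extension and good reduction this image dies in $\tilde{E}(\QQ_v)/2\tilde{E}(\QQ_v)$, and in particular showing that a rational $2$-torsion point has \emph{different} Kummer classes with respect to $\tilde{E}$ and $\tilde{E}^d$ --- is precisely Kramer's local computation \cite{Kramer}, and it is where the restriction to odd residue characteristic enters (halving in the formal group is an isomorphism only for $v\nmid 2$). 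Your alternative route via a ``local Euler characteristic computation'' of $\dim\bigl(H^1_f(\QQ_v,\tilde{E}[2])+H^1_f(\QQ_v,\tilde{E}^d[2])\bigr)$ is circular: both subspaces are Lagrangian for the Tate pairing, so spanning $H^1$ is \emph{equivalent} to meeting trivially, and no Euler characteristic identity distinguishes this from the case of two equal Lagrangians. Since you explicitly defer the delicate step to \cite{Mazur_Rubin_Hilbert}, which is exactly what the paper does, the proposal is acceptable as a citation plus sketch, but the sketch itself should not be mistaken for a proof of the intersection statement.
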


Since primes of bad reduction of $E^d$ are $\{2,3,13,p\}$, and since $13$ splits in $\QQ(\sqrt{d})$, it follows from Lemma \ref{lem:local_conditions} that local conditions $H^1_f(\QQ_v,E^d[2])$ and $H^1_f(\QQ_v,E[2])$ are equal outside the set $\T=\{2,3,p,\infty\}$.

\begin{proof}[Proof of Proposition \ref{prop:sha_trivial}]
Lower bound for the $\dim_{\FF_2}\Sel(E^d)$ in both cases follows from Lemma \ref{lem:independence} and Proposition \ref{prop:ELS_quartics}. Note that if $d \in T$, the classes $H_1^d$ and $H_2^d$ are ELS if $d>0$, classes $F_1^{-d}$ and $F_2^{-d}$ are ELS if $d<0$ and $d \equiv 7 \pmod{8}$, and classes  $F_1^{-d}$ and $H_1^d$ are ELS if $d<0$ and $d \equiv 5 \pmod{8}$.

For the upper bound we first consider the case $d \in T$. From the definition of $T$ it follows that for $|d|>3$ primes $2$ and $3$ split in $\QQ(\sqrt{d})$, thus Lemma \ref{lem:local_conditions} implies that local conditions  $H^1_f(\QQ_v,E^d[2])$ and $H^1_f(\QQ_v,E[2])$ differ only at $v=p$ (and possibly at $v=\infty$ if $d<0$ - note that if $d>0$ elliptic curves $E$ and $E^d$ are isomorphic over $\RR$). 

Assume that $d>0$ and set $\T=\{p\}$. Define a strict $2$-Selmer group $\mathcal{S}_{\T}:=\mathcal{S}_{\T}(E)$ by the exactness of 
	$$0\rightarrow \mathcal{S}_{\T} \rightarrow \mathcal{S}^{\T} \rightarrow \bigoplus_{v \in \T} H^1(\QQ_v,E[2]),$$
where the second arrow is the sum of the localization maps. 

From the construction, it follows that $\mathcal{S}_{\T}\subset \Sel(E^d)\subset\mathcal{S}^{\T}$, and  $\mathcal{S}_{\T}\subset \Sel(E) \subset \mathcal{S}^{\T}$. We will show that $\mathcal{S}_{\T} = \Sel(E)$. One can compute that
$E(\QQ)$ is generated by $2$-torsion points $S_1=(-18,0)$, $S_2=(8,0)$ and point $S_3=(45/4,-117/8)$ of infinite order, and that $\Sel(E)$ is generated by the $\kappa(S_i)$, $i=1,2,3$,  where $\kappa:E(\QQ)/2E(\QQ)\rightarrow \Sel(E)$ is the Kummer map - thus $\dim_{\FF_2}\Sel(E)=3$. It is enough to show that the image of the $\kappa(S_i)$ in $H^1(\QQ_p,E[2])$ is trivial. Choose $Q_i \in E(\overline{\QQ})$ such that $2 Q_i = S_i$. The fields of definitions $K_i$ of points $Q_i$ are $K_1=\QQ(\alpha_1)$ where  $\alpha_1^4+106 \alpha_1^2+1=0$, $K_2=\QQ(\alpha_2)$ where $\alpha_2^4-50 \alpha_2^2+729=0$ and $K_3=\QQ(\alpha_3)$ where $\alpha_3^2-3\alpha_3-43/4=0$. It happens that $p$ splits completely in all the fields, hence the claim follows. 

Lemma 3.2 in \cite{Mazur_Rubin_Hilbert} implies that $\dim_{\FF_2}\mathcal{S}^{\T}-\dim_{\FF_2}\mathcal{S}_{\T}=\dim_{\FF_2}H^1_f(\QQ_p,E[2])$. By Lemma \ref{lem:size_Hf} and inclusion $\Sel(E^d)\subset \mathcal{S}^T$, it follows $\dim_{\FF_2}\Sel(E^d)\le 3+2=5$, and the claim follows.

 The case $d<0$ is analogous - to get the equality of local conditions at $v=\infty$ one replaces $E$ with $E^{-1}$, and then proceeds as in the $d>0$ case.

Now assume that $d\notin T$. Consider the case $d<0$. In the case $d>0$ one repeats the same argument with $E^{-1}$ replaced by $E$. Primes $2$ and $3$ do not need to split in $\QQ(\sqrt{d})$ any more, hence we set $\tilde{T}=\{2,3,p\}$ and  $\mathcal{S}^{\T}:=\mathcal{S}^{\T}(E^{-1})$ and $\mathcal{S}_{\T}:=\mathcal{S}_{\T}(E^{-1})$ (we replaced $E$ with $E^{-1}$ in definitions to ensure the equality of local conditions at $v=\infty$). Lemma 3.2 in \cite{Mazur_Rubin_Hilbert} and Lemma \ref{lem:size_Hf} imply that $\dim_{\FF_2}\mathcal{S}^{\T}-\dim_{\FF_2}\mathcal{S}_{\T}=\dim_{\FF_2}H^1_f(\QQ_2,E^{-1}[2])+\dim_{\FF_2}H^1_f(\QQ_3,E^{-1}[2])+\dim_{\FF_2}H^1_f(\QQ_p,E^{-1}[2])=3+2+2= 7$. Since $\mathcal{S}_{\T}\subset \Sel(E^{-1})$, if we show that the image of each class in $\Sel(E^{-1})$ (which is generated by $H^{-1}, F_1$ and $F_3$) under the localization $\loc_2:\Sel(E^{-1}) \rightarrow H^1(\QQ_2,E^{-1}[2])$ is different than zero, then it follows that $\mathcal{S}_{\T}={0}$. One can check that, for any $P\in E^{-1}(\QQ)/2E^{-1}(\QQ)$ and $Q\in E^{-1}(\overline{\QQ})$ such that $2 Q=P$, $2$ is ramified in the field of definition of $Q$, hence the localization of $\kappa(P)$ at $v=2$ is nontrivial, and $\mathcal{S}_{\T}={0}$. It follows that $\dim_{\FF_2}\mathcal{S}^{\T}=7$. 

Lemma \ref{lem:independence} b) provides us with the generators of $\mathcal{S}^{\T}$ once we show that the torsion classes together with classes $H,F_1,H_1,F_2,F_4 \in H^1(\QQ,E)$ satisfy local conditions $H^1_f(\QQ_v,E^{-1}[2])$ for $v$ outside the set $\T$. Equivalently, one can check that the quartics $H^{-1},F_1,H_1^{-1},F_2$ and $F_4^{3}$ (as two covers of $E^{-1}$) are locally solvable outside the set $\T$. Local solvability at the finite places outside the set $\{2,3,13\}$ follows immediately from Hensel lemma argument (as in the proof of Proposition \ref{prop:ELS}) since these are the bad primes of $E^{-1}$, while solvability at $v=\infty$ (i.e. existence of the real points on quadratic twists) follows from the observation that polynomials of degree $4$ defining $H$ and $H_1$ have real roots. The local solvability at $v=13$ follows from the fact that $\legendre{p}{13}=1$, which implies that $p$ is a square in $\QQ_{13}$, thus quadratic twist by $\QQ(\sqrt{d})$ or $\QQ(\sqrt{-d})$ of any quartic from Lemma \ref{lem:independence} b) is  isomorphic over $\QQ_{13}$ to that quartic. Hence, we only need to check that $F_4^3$ is locally solvable at $v=13$ which is checked readily.

We will prove that $\dim_{\FF_2} \Sel(E^d) \le 4$, which will imply that $\dim_{\FF_2} \Sel(E^d)=3$ since $\dim_{\FF_2} \Sel(E^d)$ is odd (by \cite{Dokchitser_Dokchitser} $(-1)^{\dim_{\FF_2}\Sel(E^d)}= w(E^d)=-1$) and greater or equal to $3$ (since $H^d$ and the torsion classes of $E^d$ are linearly independent in $\Sel(E^d)$). Essentially, for each class in $\mathcal{S}^{\T}$ (generators are given by Lemma \ref{lem:independence} b)), we will check if it satisfies the local conditions $H_f^1(\QQ_v,E^{d}[2])$.

Observe that the local condition at $v=p$, $H_f^1(\QQ_p,E^d[2])$, for $p\ne \{2,3,13\}$ is determined with the image of $2$-torsion $\kappa(P_1)(\sigma)=\chi_{3}(\sigma)P_1+\chi_{d}(\sigma)P_3$, $\kappa(P_2)=\chi_{-13d}(\sigma)P_1+\chi_{-2}(\sigma)P_3$ (since the elements are independent and dimension of the local condition is $2$). As the remaining generators of $\mathcal{S}^{\T}$, $H:\sigma \mapsto \chi_{13}(\sigma)P_3$, $F_1:\sigma \mapsto \chi_{-2}(\sigma)P_3$, $H_1:\sigma\mapsto \chi_3(\sigma)P_1$, $F_4:\sigma \mapsto \chi_6(\sigma)P_3$ and $F_2+H_1=\chi_{-1}(\sigma)P_3$ do not depend on $d$ (here $\chi_q$ denotes the nontrivial character of $\QQ(\sqrt{q}$)), the local condition at $v=p$ can be satisfied by some class from the subspace generated by $H,H_1,F_1,F_4$ and $F_2$ only if the localization of that class at $v=p$ is trivial. If $p\equiv 5 \pmod{8}$, then $-1,13$ are squares in $\QQ_p$ while $2$ and $3$ are not, thus $H$, $F_2+H_1$ and $F_4$ generate the subspace of $\mathcal{S}^{\T}$ with required property, while if $p\equiv 7 \pmod{8}$, then $13,2$ are squares in $\QQ_p$ while $-1$ and $3$ are not, thus $H, F_1+F_4$ and $F_1+F_2+H_1$ generate the subspace of $\mathcal{S}^{\T}$ consisting of elements whose localization at $v=p$ is trivial.

Next, to rule out remaining classes, we focus on the local condition at $v=3$. If $p\equiv 5 \pmod{8}$, then $d$ is a square in $\QQ_3$, and the classes $\loc_3\kappa(P_1)(\sigma)=\chi_3(\sigma)P_1$ and $\loc_3\kappa(P_2)(\sigma)=\chi_{-1}(\sigma)P_1$ linearly independent, thus they generate $2$-dimensional $\FF_2$-vector space $H_f^1(\QQ_3,E^d[2])$. Since, $\loc_3(F_4)(\sigma)=\chi_6(\sigma)P_3=\chi_{-3}(\sigma)P_3 \notin H_f^1(\QQ_3,E^d[2])$,  we conclude that in this case $\dim_{\FF_2}\Sel(E^d)\le 4$, hence equal to $3$.
	
If $p\equiv 7 \pmod{8}$, then $\loc_3\kappa(P_1)(\sigma)=\chi_3(\sigma)P_1+\chi_{-1}(\sigma)P_3$ and $\loc_3\kappa(P_2)(\sigma)=0$ generate a $1$-dimensional subspace of the $2$-dimensional vector space $H_f^1(\QQ_3,E^d[2])$.  Note that not all the localisations of the classes of interest $\loc_3(F_1+F_4)(\sigma)=\chi_{-3}(\sigma)P_3$ and $\loc_3(F_1+F_2+H_1)(\sigma)=\chi_{-1}(\sigma)P_3$ can lie in $H_f^1(\QQ_3,E^d[2])$ (since the subspace they generated does not contain $\loc_3\kappa(P_1)(\sigma)$), hence $\dim_{\FF_2}\Sel(E^d)\le 4$, and the claim follows. 
\end{proof}

The following proposition follows immediately from the explicit description of $\Sel(E^d)$ given in Proposition \ref{prop:sha_trivial}. 

\begin{proposition}\label{prop:sha24}
	Let $d\in T$ (hence $\dim_{\FF_2} \Sel(E^d)=5$). We have that $\Sha(E^d)[2]=2\Sha(E^d)[4]$ if and only if 
	\begin{enumerate}
		\item [a)] $\CT{H_1^d}{H_2^d}=0$ and $\CT{H^d}{H_i^d}=0$ for $i=1,2$ if $d>0$,
		\item [b)]  $\CT{F_1^{-d}}{F_2^{-d}}=0$ and $\CT{H^d}{F_i^{-d}}=0$ for $i=1,2$ if $d<0$ and $d \equiv 7 \pmod{8}$,
	\end{enumerate}
\end{proposition}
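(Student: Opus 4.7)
The plan is to combine the structural description of $\Sel(E^d)$ from Proposition \ref{prop:sha_trivial} with two standard properties of the Cassels-Tate pairing. First, I will recall that $\CT{\cdot}{\cdot}$ on $\Sel(E^d)$ is bilinear and alternating, that its radical equals $2\,\textrm{Sel}^{(4)}(E^d)$, and that it induces a non-degenerate alternating pairing on the quotient $\Sel(E^d)/2\,\textrm{Sel}^{(4)}(E^d)\cong \Sha(E^d)[2]/2\Sha(E^d)[4]$. In addition, the pairing vanishes identically whenever one argument lies in the Kummer image $\kappa(E^d(\QQ)/2E^d(\QQ))\subset \Sel(E^d)$, since this subspace is the kernel of $\iota$ and hence sits inside the radical.

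From this I conclude that $\Sha(E^d)[2]=2\Sha(E^d)[4]$ is equivalent to the identical vanishing of $\CT{\cdot}{\cdot}$ on all of $\Sel(E^d)$. By bilinearity and the alternating property $\CT{x}{x}=0$, this in turn reduces to the vanishing of $\CT{x}{y}$ on every unordered pair of distinct generators of $\Sel(E^d)$, and any such pair involving a class in $\kappa(E^d[2])$ contributes $0$ automatically.

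Finally I apply Proposition \ref{prop:sha_trivial}. For $d\in T$ with $d>0$, the five generators of $\Sel(E^d)$ are $\kappa(Q_1),\kappa(Q_2),H^d,H_1^d,H_2^d$, the first two of which lie in $\kappa(E^d[2])$; hence only the three pairings $\CT{H^d}{H_1^d},\CT{H^d}{H_2^d},\CT{H_1^d}{H_2^d}$ remain to be checked, and their simultaneous vanishing is equivalent to $\Sha(E^d)[2]=2\Sha(E^d)[4]$, giving (a). For $d\in T$ with $d<0$ and $d\equiv 7\pmod 8$, the same argument applied to the generating set $\kappa(Q_1),\kappa(Q_2),H^d,F_1^{-d},F_2^{-d}$ yields (b).

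The only subtlety I expect is the justification that an alternating form on the quotient $\Sel(E^d)/2\,\textrm{Sel}^{(4)}(E^d)$ remains alternating after being pulled back to $\Sel(E^d)$ itself, so that the reduction to pairs of \emph{distinct} generators is valid; this is routine but should be noted. Everything else is direct bookkeeping that reads the distinguished non-torsion generators straight off Proposition \ref{prop:sha_trivial}.
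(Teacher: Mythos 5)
Your argument is correct and takes essentially the same route as the paper: both directions reduce to the non-degeneracy of the Cassels--Tate pairing on $\Sha(E^d)[2]/2\Sha(E^d)[4]$ together with the explicit generators of $\Sel(E^d)$ supplied by Proposition \ref{prop:sha_trivial}, the torsion classes lying in the radical. Your write-up simply makes explicit the bookkeeping that the paper's terse proof leaves implicit (the paper additionally notes that for $d<0$, $d\equiv 5 \pmod 8$ one always has $\CT{H^d}{F_1^{-d}}=1$, a case outside the two listed in the statement).
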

\begin{proof}
	If $\Sha(E^d)[2]=2\Sha(E^d)[4]$, then the Cassels-Tate pairing on $\Sel(E^d)$ is trivial (since it is non-degenerate on $\Sha(E^d)[2]/2\Sha(E^d)[4]$), hence the claim follows.
	Similarly, if a),b) o holds, then Proposition \ref{prop:sha_trivial} implies the Cassels-Tate pairing on $\Sel(E^d)$ is trivial, hence $\Sha(E^d)[2]=2\Sha(E^d)[4]$.
	Note that in the case $d<0$ and $d \equiv 5 \pmod{8}$, we always have $\CT{H^d}{F_1^{-d}}=1$ (see Theorem \ref{thm:main}a)), hence $\Sha(E^d)[2] \ne 2\Sha(E^d)[4]$.
\end{proof}

\section{Cassels-Tate pairing and governing fields} \label{sec:governing}

Our main tool for studying Cassels-Tate pairing of quadratic twists of elements of $2$-Selmer groups is the following specialisation of the theorem of Smith (see Section $3$ in \cite{Smith}).

\begin{theorem}[Smith]\label{thm:Smith}
	Let $\tilde{E}$ be an elliptic curve over $\QQ$ with full $2$-torsion over $\QQ$. Let
	$$F,F' \in H^1(\QQ,\tilde{E}[2]),$$
	and let $K$ be the minimal field over which $F$ and $F'$ are trivial.
	Next, let $S$ be any set of places of $\QQ$ which contains all places of bad reduction of $\tilde{E}$, the archimedean place and $2$. Take $\mathcal{D}$ to be the set of pairs $(d_1,d_2)$ of elements in $\QQ^\times$ such that $d_1/d_2$ is square at all places of $S$, and $F^{d_1}$ and $F'^{d_2}$ are elements of $2$-Selmer group of $\tilde{E}^{d_1}$ and $\tilde{E}^{d_2}$ respectively. 
	
	If $F\cup F'$ is alternating (as defined in Section $3$ of \cite{Smith}), then $\CT{F^{d_1}}{F'^{d_1}}=\CT{F^{d_2}}{F'^{d_2}}$ for all $(d_1,d_2) \in \mathcal{D}$. Otherwise, there is a quadratic extension $L$ of $K$ that is ramified only at primes in $S$ such that
	$$\CT{F^{d_1}}{F'^{d_1}}=\CT{F^{d_2}}{F'^{d_2}}+\left[\frac{L/K}{\mathbf{d}}\right],$$
	for all $(d_1,d_2)\in \mathcal{D}$, where the Galois group $\Gal(L/K)$ is identified with $\frac{1}{2}\ZZ/\ZZ$. Here $\mathbf{d}$ is any ideal of $K$ coprime to the conductor of $L/K$ that has norm in $\QQ^\times/{\QQ^\times}^2$ equal to $(d_1/d_2)$. Such $\mathbf{d}$ exists for all $(d_1,d_2)\in \mathcal{D}$. We denote by $\left[\frac{\ \cdot\ }{\cdot}\right]$ the Artin symbol.
\end{theorem}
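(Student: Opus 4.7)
The plan is to prove Theorem~\ref{thm:Smith} by combining an explicit cocycle description of the Cassels-Tate pairing with local class-field-theoretic reciprocity over $K$. First I would fix cocycle representatives for $F$ and $F'$ in $H^1(\QQ,\tilde{E}[2])$; since both vanish upon restriction to $G_K := \Gal(\overline{\QQ}/K)$, one obtains $0$-cochains $\xi,\xi' \in C^0(G_K,\tilde{E}[2])$ with $d\xi = F|_{G_K}$ and $d\xi' = F'|_{G_K}$. A Poonen-Stoll-style formula then expresses $\CT{F^d}{F'^d}$ as a sum over places $v$ of local invariants built from $\xi$, $\xi'$, the Weil pairing, and the local character $\chi_d$ attached to the twist. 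The quadratic twist by $d$ does not alter $\tilde{E}[2]$ as a $G_\QQ$-module (since $\tilde{E}$ has full rational $2$-torsion), so the twist dependence enters only through $\chi_d$ in these local terms.

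Next I would compare the sums for two twists $(d_1,d_2) \in \mathcal{D}$. Because $d_1/d_2$ is a square at every place of $S$, the local summands at $v \in S$ coincide for the two twists and cancel in the difference. At places $v \notin S$ the curve $\tilde{E}$ has good reduction (and $v$ is odd), so the relevant cohomology is unramified and the local difference can be rewritten as a Hilbert symbol $(d_1/d_2,\beta)_v$, where $\beta \in K^\times$ is a global element canonically built from the cup product $F \cup F'$ after descent to $H^2(K,\mu_2) \cong K^\times/K^{\times 2}$. Applying global reciprocity on $K$, the sum of these local symbols telescopes into a single Artin symbol: writing $d_1/d_2$ modulo squares as the norm (from $K$ to $\QQ$) of an ideal $\mathbf{d}$ of $K$ coprime to the conductor of $L := K(\sqrt{\beta})$, the difference of pairings equals $\left[\frac{L/K}{\mathbf{d}}\right]$. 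The alternating case is exactly when the cup product descends trivially, forcing $\beta \in K^{\times 2}$, hence $L = K$ and twist invariance of the pairing.

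The main obstacle is the canonical construction of $\beta$, equivalently of the governing field $L$, and the verification that $L/K$ is ramified only at primes above $S$. This requires a careful trace of the class $F \cup F'$ through the inflation-restriction sequence, identifying the component of $H^2(K,\mu_2)$ that controls the twist dependence, and using Kummer theory to realise it as a square class in $K^\times$. A secondary technical point is the existence of an ideal $\mathbf{d}$ with the prescribed norm modulo squares for every $(d_1,d_2) \in \mathcal{D}$, together with the independence of $\left[\frac{L/K}{\mathbf{d}}\right]$ from that choice; both follow from the hypothesis that $d_1/d_2$ is a square at every place of $S$ (in particular at the archimedean place and at $v=2$), which guarantees that the ambiguity lives in the subgroup of principal ideals with generator a local square at $S$, on which the Artin symbol into $L$ is trivial.
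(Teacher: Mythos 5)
The paper does not prove this statement at all: it is imported verbatim (as a specialisation) from Section~3 of Smith's paper, and the author's ``proof'' is simply the citation. So there is no internal argument to compare yours against; what can be judged is whether your outline would actually constitute a proof. As it stands it does not. You have reproduced the correct general strategy of Smith's argument --- express the Cassels--Tate pairing via a Weil-pairing/Poonen--Stoll local sum, observe that twisting only changes the local terms through $\chi_{d}$, cancel the contributions at places of $S$ using the hypothesis that $d_1/d_2$ is a local square there, and convert the remaining local terms into an Artin symbol over $K$ --- but every step that carries the actual mathematical weight is deferred. You say yourself that ``the main obstacle is the canonical construction of $\beta$''; that construction, together with the proof that the resulting $L/K$ is ramified only above $S$ and that the sum of local differences really equals $\left[\frac{L/K}{\mathbf{d}}\right]$, \emph{is} the theorem. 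A plan that names the hard part and postpones it is not a proof.

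Two specific points in the outline are also too naive and would need repair, not just elaboration. First, you describe the local difference at $v\notin S$ as a Hilbert symbol $(d_1/d_2,\beta)_v$ with $\beta\in K^\times$ obtained by ``descending'' $F\cup F'$ to $H^2(K,\mu_2)\cong K^\times/K^{\times 2}$ via Kummer theory. But $F\cup F'$ restricted to $G_K$ is a coboundary (both classes die over $K$), so its image in $H^2(K,\mu_2)$ is zero; the quadratic extension $L/K$ is not cut out by a cohomology class over $K$ but by an explicit \emph{cochain} $\gamma$ on $G_\QQ$ with $d\gamma=F\cup F'$, defined over a dihedral-type extension of $\QQ$ (this is exactly the content of the paper's Lemma~\ref{lem:D8} and of Smith's Proposition~2.1, and it is why $L/\QQ$ is typically a non-abelian $D_8$-extension rather than $K(\sqrt{\beta})$ for a canonical class $\beta\in H^2(K,\mu_2)$). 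Second, your characterisation of the alternating case as ``the cup product descends trivially, forcing $\beta\in K^{\times2}$'' is not Smith's condition: being alternating is a condition on the cochain $\sigma\mapsto e_2(F(\sigma),F'(\sigma))$ (compare the paper's use of it in Subsection~4.6), which is strictly finer than the vanishing of the class of $F\cup F'$ in $H^2$. Without the correct formulation you cannot even state when the governing field exists.
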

\begin{remark}
	We will call field $L$ from the statement of Theorem \ref{thm:Smith} a governing field of $F$ and $F'$. It needs not to be unique.
\end{remark}

Next, we compute the governing fields of some pairs of classes defined by quartics from \eqref{eq:quartics} (see Table \ref{table:fields}). 

In general, following Section 3.1. in \cite{Smith}, for $F,F'\in H^1(\QQ,E[2])$ let $\omega_*(F)=(a_1,a_2,a_3)$ and $\omega_*(F')=(a_1',a_2',a_3')$.  For every place $v$ we have the following relation of Hilbert symbols $(a_1,a_1')_v (a_2,a_2')_v (a_3,a_3')_v=1$. We can choose $b \in \QQ^\times$ such that  $(a_1,b a_1')_v= (a_2,b a_2')_v = (a_3,b a_3')_v=1$ which implies that we can find $x_i, y_i,z_i\in \QQ^\times$
such that $x_i^2-a_i y_i^2=b a_i' z_i^2$ for $i=1,2,3$. We can further scale $x_i, y_i$ and $z_i$ by a common factor so that the field
$$L_{F_,F'}=K_{F,F'}\left(\sqrt{(x_1+y_1\sqrt{a_1})(x_2+y_2\sqrt{a_2})(x_3+y_3\sqrt{a_3})}\right)$$ avoids ramification at places unramified in the common field of definition $$K_{F,F'}:=\QQ(\sqrt{a_1},\sqrt{a_2},\sqrt{a_1'},\sqrt{a_2'}).$$

\begin{lemma}[Smith]
	If $F \cup F'$ is not alternating and $\deg K_{F,F'}/\QQ = 16$, then $L_{F,F'}$ is a governing field of $F$ and $F'$.
\end{lemma}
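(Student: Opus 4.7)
The plan is to apply the general machinery of Theorem \ref{thm:Smith} (i.e.\ Section~3 of \cite{Smith}) and verify that the explicit radical extension $L_{F,F'}/K_{F,F'}$ constructed above is a valid governing field. The first routine check is that the hypothesis $\deg K_{F,F'}/\QQ = 16$ forces $K_{F,F'}$ to coincide with the minimal field $K$ over which both $F$ and $F'$ become cohomologically trivial. Indeed, under $\omega_*$ the class $F$ is represented by the cocycle $\sigma \mapsto \chi_{a_1}(\sigma)P_1+\chi_{a_2}(\sigma)P_2$, which trivializes exactly over $\QQ(\sqrt{a_1},\sqrt{a_2})$, and similarly for $F'$. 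The maximal-degree assumption says that $\sqrt{a_1},\sqrt{a_2},\sqrt{a_1'},\sqrt{a_2'}$ generate linearly independent quadratic characters, so their compositum equals $K$.

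Next, I would check that $L_{F,F'}$ is in fact a proper quadratic extension of $K_{F,F'}$. From the defining relation $x_i^2-a_i y_i^2 = b a_i' z_i^2$, each factor $x_i+y_i\sqrt{a_i}$ has $\QQ(\sqrt{a_i})/\QQ$-norm equal to $b a_i' z_i^2$. Since $K_{F,F'}$ contains $\sqrt{a_i'}$, this factor is congruent to $b$ modulo $K_{F,F'}^{\times 2}$, so the full radicand represents the class of $b^3 \equiv b$ in $K_{F,F'}^\times/K_{F,F'}^{\times 2}$. The non-alternating hypothesis on $F\cup F'$ is precisely what ensures that $b$ is not a square in $K_{F,F'}$ (the alternating case corresponds to being able to choose $b$ trivially, killing the extension). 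Ramification of $L_{F,F'}/K_{F,F'}$ outside $S$ is controlled by rescaling $x_i,y_i,z_i$ by a common $S$-integer so that each $x_i+y_i\sqrt{a_i}$ becomes an $S$-unit up to squares in $K_{F,F'}$; this is where the explicit choice of $b$ with trivial Hilbert symbols $(a_i, b a_i')_v = 1$ at every place is used.

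The main step is to verify the Artin symbol formula
\[
\CT{F^{d_1}}{F'^{d_1}} - \CT{F^{d_2}}{F'^{d_2}} = \left[\tfrac{L_{F,F'}/K_{F,F'}}{\mathbf{d}}\right]
\]
for any pair $(d_1,d_2)\in\mathcal{D}$ and any ideal $\mathbf{d}$ of $K_{F,F'}$ of norm $d_1/d_2 \bmod \QQ^{\times 2}$. The approach is to use the explicit cocycle-level formula for the Cassels-Tate pairing of quartic twists from \cite{Smith}, which expresses the difference on the left as a product of local Hilbert symbols $(x_i+y_i\sqrt{a_i},\, d_1/d_2)_v$ over places $v$ of $K_{F,F'}$. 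By global reciprocity on $K_{F,F'}$ (applied to the divisor $\mathbf{d}$) and the fact that $d_1/d_2$ is a square at all places of $S$, this product collapses to the Artin symbol of $\mathbf{d}$ in the Kummer extension $K_{F,F'}(\sqrt{\prod_i (x_i+y_i\sqrt{a_i})})/K_{F,F'}$, which is exactly $L_{F,F'}/K_{F,F'}$.

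The principal obstacle is bookkeeping, not a conceptual difficulty: one must track the normalizations of $x_i,y_i,z_i$ carefully enough that the Hilbert-symbol-to-Artin-symbol reciprocity step involves no ramified auxiliary primes outside $S$, and that the local contributions at places above $S$ vanish by the squareness hypothesis on $d_1/d_2$. Because this is precisely the content of Theorem~3.2 in \cite{Smith} specialized to the generic-degree, non-alternating case, the cleanest presentation is to quote Smith's theorem and note that the hypothesis $\deg K_{F,F'}/\QQ = 16$ guarantees we are in the regime where the explicit formula for $L_{F,F'}$ applies without degeneracies.
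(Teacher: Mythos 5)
The paper offers no proof of this lemma at all: it is imported verbatim from Section~3.1 of \cite{Smith} (the label ``[Smith]'' is the whole argument), so your closing move of deferring to Smith's Theorem~3.2 is in fact exactly what the paper does, and the reciprocity-law outline in your third paragraph is a reasonable sketch of what Smith's proof looks like. The preliminary observations that $\deg K_{F,F'}/\QQ=16$ forces $K_{F,F'}$ to be the minimal trivializing field $K$ of Theorem~\ref{thm:Smith}, and that the Hilbert-symbol condition $(a_i,ba_i')_v=1$ is what makes the conics $x_i^2-a_iy_i^2=ba_i'z_i^2$ solvable, are both correct.

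There is, however, a genuine error in your second paragraph. From $x_i^2-a_iy_i^2=ba_i'z_i^2$ you conclude that the \emph{element} $x_i+y_i\sqrt{a_i}$ is congruent to $b$ modulo $K_{F,F'}^{\times 2}$; but that identity only computes its \emph{norm} from $\QQ(\sqrt{a_i})$ down to $\QQ$, and an element of a quadratic extension is not determined modulo squares by its norm (its class differs from its conjugate's by the norm, nothing more). If your claim were true, the radicand $\prod_i(x_i+y_i\sqrt{a_i})$ would lie in $\QQ^\times\cdot K_{F,F'}^{\times 2}$, so $L_{F,F'}/\QQ$ would be multiquadratic, hence abelian --- contradicting the entire mechanism of governing fields here, in which $L_{F,F'}$ contains a $D_8$-extension of $\QQ$ (this is Lemma~\ref{lem:D8} and every entry of Table~\ref{table:fields}, e.g.\ $L_{H,H_1}=\QQ(\sqrt{3},\sqrt{13})(\sqrt{4+\sqrt{13}})$, whose radicand $4+\sqrt{13}$ is visibly not a rational square class). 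Consequently your proposed characterization ``non-alternating $\Leftrightarrow$ $b$ is a nonsquare in $K_{F,F'}$'' is also unfounded: in Smith's framework (Lemma~3.1 of \cite{Smith}, used in Section~\ref{sec:governing} of the paper) the alternating condition is the vanishing of $e_2(F(\sigma),F'(\sigma))$ for all $\sigma\in\GalQ$, a property of the cocycles themselves, not of the auxiliary scalar $b$. The correct route to non-triviality and to the Artin-symbol identity is the cocycle-level computation in Smith's proof of Proposition~2.1 and Theorem~3.2, which your sketch invokes but does not actually supply.
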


Although in our case $\deg K_{F,F'}/\QQ$ is either four or eight, we can still compute governing fields using the following lemma which follows from the proof of Proposition 2.1. in \cite{Smith}.

\begin{lemma}\label{lem:D8}
	For integers $a$ and $b$ such that $ab$ is not a perfect square let $L_{a,b}/\QQ(\sqrt{a},\sqrt{b})$ be quadratic extension such that $L_{a,b}/\QQ$ is Galois with Galois group isomorphic to dihedral group $D_8$. There exist a map $$\gamma_{a,b}:\GalQ \xtwoheadrightarrow{res} \Gal(L_{a,b}/\QQ) \rightarrow \mu_2$$
	which satisfies $d\gamma_{a,b}=\chi_a \cup \chi_b \in H^2(\GalQ,\mu_2)$. Here $\mu_2=\{\pm 1\}$ and the cup product $\chi_a \cup \chi_b$ is induced by the natural bilinear map $\ZZ/2\ZZ \times \ZZ/2\ZZ \rightarrow \ZZ/2\ZZ$ (hence for $\sigma, \tau \in \GalQ$ we have that $(\chi_a\cup \chi_b)(\sigma,\tau)=-1$ if and only if $\sqrt{a}^{\,\sigma} = -\sqrt{a}$ and $\sqrt{b}^{\,\tau} = -\sqrt{b}$).
\end{lemma}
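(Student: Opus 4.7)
The plan is to realise $\gamma_{a,b}$ as the pullback along $\mathrm{res}: \GalQ \twoheadrightarrow \Gal(L_{a,b}/\QQ)$ of a set-theoretic trivialisation of the central extension
\begin{equation*}
	1 \longrightarrow \mu_2 \longrightarrow \Gal(L_{a,b}/\QQ) \xrightarrow{\;\pi\;} \Gal(\QQ(\sqrt{a},\sqrt{b})/\QQ) \longrightarrow 1,
\end{equation*}
in which $\mu_2 \cong Z(D_8)$ is the kernel. Concretely, I fix any set-theoretic section $s$ of $\pi$ and put
\begin{equation*}
	\gamma_{a,b}(\sigma) \,:=\, \mathrm{res}(\sigma)\cdot s\bigl(\pi(\mathrm{res}(\sigma))\bigr)^{-1},
\end{equation*}
which lands in the central $\mu_2$ by construction. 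A short computation exploiting centrality of $\mu_2$ then yields $d\gamma_{a,b}(\sigma,\tau) = c_s(\bar\sigma,\bar\tau)$, where $\bar\sigma := \pi(\mathrm{res}(\sigma))$ and $c_s \in Z^2((\ZZ/2)^2, \mu_2)$ is the classifying $2$-cocycle of the extension relative to $s$.

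The task therefore reduces to identifying $[c_s]$ with $[\chi_a \cup \chi_b]$ in $H^2((\ZZ/2)^2, \mu_2)$; once this is in hand, any remaining discrepancy at the level of cocycles can be absorbed by replacing $s$ with $s \cdot \phi$ for a suitable $\mu_2$-valued cochain $\phi$ on $(\ZZ/2)^2$, and pullback via $\pi \circ \mathrm{res}$ delivers $d\gamma_{a,b} = \chi_a \cup \chi_b$ as cocycles on $\GalQ$. The class $[c_s]$ is determined by the image in $(\ZZ/2)^2$ of the unique cyclic subgroup of order four in $\Gal(L_{a,b}/\QQ) \cong D_8$. I would unwind the dihedral presentation $D_8 = \langle r, s \mid r^4 = s^2 = 1, srs = r^{-1}\rangle$ and restrict $c_s$ to the three order-two subgroups of $(\ZZ/2)^2$ in order to read off its coefficients in the basis $\chi_a^2, \chi_a \chi_b, \chi_b^2$ of $H^2((\ZZ/2)^2, \mu_2) \cong \FF_2^3$, using that the restriction map to $\prod_H H^2(H, \mu_2)$ (over the three order-two subgroups $H$) is an isomorphism. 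For the $L_{a,b}$ supplied by the preceding square-root construction — in which $\QQ(\sqrt{ab})$ is the fixed field of the $\ZZ/4$ subgroup — only the restriction to $\langle \sigma_a \sigma_b\rangle$ is non-trivial, which pins the class down to $[\chi_a \cup \chi_b]$.

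The main obstacle is precisely this cohomological bookkeeping. Because $H^2((\ZZ/2)^2, \mu_2)$ is three-dimensional, one has to rule out contamination by the ``diagonal'' classes $\chi_a^2$ and $\chi_b^2$, and the correct answer rests entirely on the normalisation that singles out $\QQ(\sqrt{ab})$ as the fixed field of the unique $\ZZ/4$ subgroup. This normalisation is built into $L_{a,b}$ through the explicit square root of $(x_1+y_1\sqrt{a_1})(x_2+y_2\sqrt{a_2})(x_3+y_3\sqrt{a_3})$ which trivialises the Hilbert symbol $(a,b)$; threading this specific structural data through the dihedral cocycle — and, where necessary, using the identity $d(\chi_a\chi_b) = \chi_a \cup \chi_b + \chi_b \cup \chi_a$ to compensate for the non-symmetry of cup products at the cocycle level — is where the delicate part of the argument sits.
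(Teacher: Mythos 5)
Your argument is sound, but it is worth noting that the paper does not actually prove this lemma at all: it is introduced with the phrase ``which follows from the proof of Proposition 2.1 in [Smith]'', so the only comparison available is with Smith's argument, which constructs $L_{a,b}$ explicitly from a solution of $x^2-ay^2=bz^2$ and verifies the coboundary identity by a direct cocycle computation. Your route is more structural: you package $\gamma_{a,b}$ as the $\mu_2$-valued defect $e\mapsto e\cdot s(\pi(e))^{-1}$ of a set-theoretic section of the central extension $1\to\mu_2\to\Gal(L_{a,b}/\QQ)\to\Gal(\QQ(\sqrt a,\sqrt b)/\QQ)\to1$, so that $d\gamma_{a,b}$ is the inflation of the classifying cocycle $c_s$, and you then pin down $[c_s]$ by restricting to the three order-two subgroups of $(\ZZ/2)^2$ (that this restriction map is injective on $H^2$ is correct, and the preimage of $\langle g\rangle$ is cyclic of order $4$ exactly when the restriction to $\langle g\rangle$ is nonzero). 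This buys a proof that does not depend on the particular equations defining $L_{a,b}$, only on its group-theoretic shape, at the cost of the cocycle-versus-class bookkeeping you describe; adjusting $s$ by a $\mu_2$-valued $1$-cochain on the quotient does absorb the difference between $c_s$ and the standard representative of $\chi_a\cup\chi_b$, as you claim.

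The one point that deserves to be made explicit rather than deferred to ``bookkeeping'' is the normalisation you flag: the lemma as literally stated allows \emph{any} quadratic extension $L_{a,b}$ of $\QQ(\sqrt a,\sqrt b)$ with $\Gal(L_{a,b}/\QQ)\cong D_8$, but if the cyclic subgroup of order $4$ were to fix $\QQ(\sqrt b)$ rather than $\QQ(\sqrt{ab})$, the extension class would be $\chi_a\cup\chi_a+\chi_a\cup\chi_b$, and $\chi_a\cup\chi_a=\chi_{-1}\cup\chi_a$ need not vanish in $H^2(\GalQ,\mu_2)$ (e.g.\ for $a=3$ it is nontrivial at $3$), so no choice of $\gamma$ factoring through $\Gal(L_{a,b}/\QQ)$ could satisfy the conclusion. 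You correctly resolve this by importing from the surrounding construction the fact that the order-four subgroup fixes $\QQ(\sqrt{ab})$ — for $L=\QQ\bigl(\sqrt a,\sqrt{x+y\sqrt a}\bigr)$ with $x^2-ay^2=bz^2$ one checks that an automorphism has order $4$ precisely when it moves both $\sqrt a$ and $\sqrt b$ — but this hypothesis is genuinely needed and should be stated as part of the lemma rather than treated as an incidental feature of one construction.
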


\subsection{ $L_{H^{-1},F_2}= \QQ(\sqrt{13},\sqrt{-1}, \sqrt{-3})(\sqrt{3(1+\sqrt{13})(3+\sqrt{13})})$} \label{sub:jedan}
It follows from Lemma \ref{lem:independence} that $H^{-1}(\sigma)=\chi_{13}(\sigma)P_1+\chi_{13}(\sigma)P_2$ and $F_2(\sigma)=\chi_{-3}(\sigma)P_1+\chi_{-1}(\sigma)P_2$ for all $\sigma\in \GalQ$. If we define the cup product $\cup: H^1(\GalQ,E[2])\times H^1(\GalQ,E[2]) \rightarrow H^2(\GalQ,\mu_2)$ using the Weil pairing $e_2:E[2]\times E[2] \rightarrow \mu_2$, it follows that $H^{-1}\cup F_2=\chi_{13}\cup \chi_{-1} \cdot \chi_{13}\cup\chi_{-3}=\chi_{13}\cup \chi_{3}$. The field $L_{H,F_2}$ has a property that it contains subfield $L/\QQ(\sqrt{13},\sqrt{3})$ such that $L/\QQ$ is $D_8$ extension. Lemma \ref{lem:D8} implies that there exists a map $\Gamma:\GalQ\rightarrow \mu_2$ defined over $L_{H,F_2}$ such that $d\Gamma=\chi_{13}\cup \chi_{3}=H^{-1}\cup F_2$. One can check that $L_{H,F_2}/\QQ$ is unramified outside the set $\{2,3,13\}$ of primes of bad reduction of $E$, hence it follows from the proof of Theorem 3.2. in \cite{Smith} that $L_{H,F_2}$ is governing field of $H^{-1}$ and $F_2$. The choice of field $L_{H^{-1},F_2}$ is particularly nice since it is easy to check that for prime $p$ the Cassels-Tate pairing $\CT{H^{-p}}{F_2^p}$ is equal to $0$ if and only if $p$ splits completely in $L_{H^{-1},F_2}$ provided that $H^{-p}$ and $F_2^p$ define an element in $\Sel(E^{-p})$. It follows from Proposition \ref{prop:ELS_quartics} that $H^{-p}$ and $F_2^p$ are ELS if and only if $p=13$ or $p$ splits completely in the field of definition $K_{H^{-1},F_2}=\QQ(\sqrt{13},\sqrt{-1}, \sqrt{-3})$.

\subsection{$L_{H_1,H_2}=\QQ(\sqrt{3}, \sqrt{-1}, \sqrt{2})(\sqrt{8(1+\sqrt{3})(4+2\sqrt{3})})$}\label{sub:H1H2}
It follows from Lemma \ref{lem:independence} that $H_1(\sigma)=\chi_{3}(\sigma)P_1$ and $H_2(\sigma)=\chi_{-1}(\sigma)P_1+\chi_{-2}(\sigma)P_2$ for all $\sigma\in \GalQ$, thus $H_1\cup H_2=\chi_{3}\cup \chi_{-2}$. Since $L_{H_1,H_2}$ is unramified outside $\{2,3,13\}$ and since $L_{H_1,H_2}$ contains a degree two extension $L$ of $\QQ(\sqrt{3},\sqrt{-2})$ such that $L/\QQ$ is Galois with Galois group $D_8$, same as in \ref{sub:jedan}, we can conclude that $L_{H_1,H_2}$ is governing field of $H_1$ and $H_2$. Moreover, for $p$ prime such that $H_1^{p}$ and $H_2^p$ define an element in $\Sel(E^{p})$ (or equivalently for prime $p$ which splits completely in $K_{H_1,H_2}=\QQ(\sqrt{3}, \sqrt{-1}, \sqrt{2})$), we have that $\CT{H_1^{p}}{H_2^p}$ is equal to $0$ if and only if $p$ splits completely in $L_{H_1,H_2}$.
\subsection{$L_{F_1,F_2}=\QQ(\sqrt{3}, \sqrt{-1}, \sqrt{2})(\sqrt{8(1+\sqrt{3})(4+2\sqrt{3})})$}
Here conclusion is the same as in \ref{sub:H1H2}, for $p$ prime such that $F_1^{p}$ and $F_2^p$ define an element in $\Sel(E^{-p})$ (or equivalently for prime $p$ which splits completely in $K_{F_1,F_2}=\QQ(\sqrt{3}, \sqrt{-1}, \sqrt{2})$), we have that $\CT{F_1^{p}}{F_2^p}$ is equal to $0$ if and only if $p$ splits completely in $L_{F_1,F_2}=L_{H_1,H_2}$.

\subsection{$L_{H,H_2}=\QQ(\sqrt{-1},\sqrt{2}, \sqrt{13})(\sqrt{4+2 \sqrt{13}})$}
 Lemma \ref{lem:independence} implies that $H(\sigma)=\chi_{13}(\sigma)P_2$ and $H_2(\sigma)=\chi_{-1}(\sigma)P_1+\chi_{-2}(\sigma)P_2$ for all $\sigma\in \GalQ$, thus $H\cup H_2=\chi_{13}\cup \chi_{-1}$.  Since $L_{H,H_2}$ is unramified outside $\{2,3,13\}$ and since $L_{H,H_2}$ contains a degree two extension $L$ of $\QQ(\sqrt{13},\sqrt{-1})$ such that $L/\QQ$ is Galois with Galois group $D_8$, same as in \ref{sub:jedan} we can conclude that $L_{H,H_2}$ is governing field of $H$ and $H_2$.
 Also, for $p$ prime such that $H^{p}$ and $H_2^p$ define an element in $\Sel(E^{p})$ (or equivalently for prime $p$ which splits completely in $K_{H,H_2}=\QQ(\sqrt{13}, \sqrt{-1}, \sqrt{2})$), we have that $\CT{H^{p}}{H_2^p}$ is equal to $0$ if and only if $p$ splits completely in $L_{H,H_2}$.
 
\subsection{$L_{H,H_1}=\QQ(\sqrt{3},\sqrt{13})(\sqrt{4+\sqrt{13}})$} 
  Lemma \ref{lem:independence} implies that $H(\sigma)=\chi_{13}(\sigma)P_2$ and $H_1(\sigma)=\chi_{3}(\sigma)P_1$ for all $\sigma\in \GalQ$, thus $H\cup H_1=\chi_{13}\cup \chi_{3}$.  Since $L_{H,H_1}$ is unramified outside $\{2,3,13\}$ and since $L_{H,H_1}/\QQ$ is $D_8$ extension same as in \ref{sub:jedan} we conclude that $L_{H,H_1}$ is governing field of $H$ and $H_1$. Also, for $p$ prime such that $H^{p}$ and $H_1^p$ define an element in $\Sel(E^{p})$, we have that $\CT{H^{p}}{H_1^p}$ is equal to $0$ if and only if $p$ splits completely in $L_{H,H_1}$.  Note that $H^{p}$ and $H_1^p$ are ELS if and only if $p=13$ or $p$ splits completely in $K_{H,H_1}=\QQ(\sqrt{13},\sqrt{3})$ and $p\equiv 1 \pmod{4}$.
  
\subsection{$L_{H^{-1},F_1}=\QQ(\sqrt{-2},\sqrt{13})(\sqrt{-1})$} 
 It follows from Lemma \ref{lem:independence} that for all $\sigma \in \GalQ$ we have that $H^{-1}(\sigma)=\chi_{13}(\sigma)P_1+\chi_{13}(\sigma)P_2=\chi_{13}(\sigma)P_3$ and $F_1(\sigma)=\chi_{-2}(\sigma)P_1+\chi_{-2}(\sigma)P_2=\chi_{-2}(\sigma)P_3$, thus $e_2(H^{-1}(\sigma),F_1(\sigma))=1$. Therefore $H^{-1}\cup F_1$ is alternating (see Lemma 3.1. in \cite{Smith}) and $\CT{H^{-d_1}}{F_1^{d_1}}=\CT{H^{-d_2}}{F_1^{d_2}}$ for all pairs $(d_1,d_2)\in \mathcal{D}$ from Theorem \ref{thm:Smith}.  For $p$ prime such that $H^{-p}$ and $F_1^p$ define an element in $\Sel(E^{-p})$, we can check by computing set $\mathcal{D}$ that $\CT{H^{-p}}{F_1^p}$ is equal to $0$ if and only if $p$ splits completely in $L_{H^{-1},F_1}$.  Note that $H^{-p}$ and $F_1^p$ are ELS if and only if $p$ splits completely in $K_{H^{-1},F_1}=\QQ(\sqrt{13},\sqrt{-2})$, thus, as before, the splitting behaviour of $p$ in $L_{H^{-1},F_1}$ determines Cassels-Tate pairing even though $L_{H^{-1},F_1}$ is not a governing field of $H^{-1}$ and $F_1$.
 
\section{Proofs of main results}
\begin{proof}[Proof of Theorem \ref{thm:main}]
From Section \ref{sec:governing} (see also Table \ref{table:fields}), we see that the governing field of the pair $(H^{-1},F_1)$ is $L_{H^{-1},F_1}=\QQ(\sqrt{-2},\sqrt{13})(\sqrt{-1})$. In particular, $$\CT{H^d}{F_1^{-d}}=\begin{cases} 
	0 &\textrm{ if $|d|$ splits completely in }L_{H^{-1},F_1},\\
	1 &\textrm{ otherwise }.
\end{cases}
$$
For $d<0$, it follows from the description of set $T$ that $\CT{H^d}{F_1^{-d}}=1$ if $d\equiv 1 \pmod{4}$ and $\CT{H^d}{F_1^{-d}}=0$ if $d\equiv 3 \pmod{4}$. Hence a) follows.
For b), assume that $d\equiv 3 \pmod{4}$ and $\iota(H^d) \ne 0$. As argued in the introduction, there is $L\in \Sel(E^d)$ such that $\CT{H^d}{L}=1$. Since $\CT{H^d}{F_1^{-d}}=0$, from the bilinearity of the Cassels-Tate pairing it follows that $\CT{H^d}{F_2^{-d}}=1$ (as $F_2$ is remaining generator of $\Sel(E^d)$). The other implication in b) is obvious.
Part c) is proved similarly. The only difference here is that in $d>0$ case, $\Sel(E^d)$ is, in addition to torsion classes, generated by $H^d, H_1^d$, and $H_2^d$.
\end{proof}

\begin{proof}[Proof of Corollary \ref{cor:limit}]
	First we count the contribution to $S(X)$ of $d=\pm p$ for which $d\notin T$. It follows from Conjectures \ref{conj:1} and \ref{conj:2}, and Propositions \ref{prop:infinity} and \ref{prop:sha_trivial} that the only significant case is when $w(E^d)=-1$ (assuming $H^d$ is ELS) in which case $\Sha(E^d)[2]$ is trivial. It follows from Propositions \ref{prop:ELS}, \ref{prop:root} and \ref{prop:sha_trivial} that this is equivalent to
	 $\legendre{d}{13}=1$, $\legendre{d}{2}\cdot\legendre{d}{3}\cdot\legendre{d}{13}=\textrm{sgn}(d)$ and $d \not \equiv 1 \pmod{8}$ if $d>0$ or $d\not \equiv 5,7 \pmod{8}$ if $d<0$. Thus if $$d \equiv 29, 35, 53, 55, 77, 79, 101, 103, 107, 127, 131, 155, 173, 179, 199, 251, 269, 295 \pmod{ 8\cdot 3\cdot 13}$$ when $d>0$ or if $d<0$ and
	  $$d \equiv 17, 43, 113, 139, 185, 209, 211, 233, 235, 257, 259, 283 \pmod{ 8\cdot 3\cdot 13},$$ then $H^d(\QQ) \ne \emptyset$. There are $18$ residue classes in the first case, and $12$ in the second, thus by Dirichlet's theorem on arithmetic progressions, the contribution to $C_1$ is $\frac{30}{2\phi(8\cdot 3\cdot 13)}=\frac{5}{32}$.
	  
	  Next, consider the case $d>0$, $d\in T$ and $\Sha(E^d)[2]\ne 2\Sha(E^d)[4]$.
	  Corollary \ref{cor:main} together with Proposition \ref{prop:sha24} implies that in this case $H^d(\QQ)\ne \emptyset$ if and only if 
	  $d$ does not split completely in $L_{H_1,H_2}$, and splits completely in $L_{H,H_1}$ and $L_{H,H_2}$. One can check that the assumption $d>0$ and $d\in T$ is equivalent to the requirement that $d$ splits completely in $K_{H,H_1}$, $K_{H,H_2}$ and $K_{H_1,H_2}$, thus we need to find a density of $d$'s such that $d$ splits completely in composition $K = L_{H,H_1} L_{H,H_2} K_{H_1,H_2}$ but not in its degree two extension $L= L_{H,H_1} L_{H,H_2} L_{H_1,H_2}$. By Chebotarev density theorem the density of such $d$'s is $\frac{1}{\deg{K}} \cdot \frac{1}{2}$.
	  From Table \ref{table:fields} we see that $K_{H_1,H_2}$ is contained in  $L_{H,H_1} L_{H,H_2}$. Moreover, one can check that $\deg{ L_{H,H_1} L_{H,H_2}}=64$, thus in this case the contribution to $C_1$ is equal to $\frac{1}{2}\cdot\frac{1}{128}$ (we have extra $\frac{1}{2}$ since $C_1$ is a lower bound for $\frac{S(X)}{2\pi(X)}$ and not $\frac{S(X)}{\pi(X)}$ ).
	  
	  Finally, consider the case $d<0$, $d\in T$ and $\Sha(E^d)[2]\ne 2\Sha(E^d)[4]$.
	  Corollary \ref{cor:main} together with Proposition \ref{prop:sha24} implies that $H^d(\QQ)\ne \emptyset$ if and only if
	  $d=-p$, where $p\equiv 1 \pmod{4}$, does not split completely in $L_{F_1,F_2}$ and splits completely in $L_{H^{-1},F_2}$. One can check that assumption $p\equiv 1 \pmod{4}$ and $-p \in T$ is equivalent to $p$ splits completely in $K_{H^{-1},F_2}$ (we see in Table \ref{table:fields} that $\QQ(\sqrt{-1}) \subset K_{H^{-1},F_2})$ and $K_{F_1,F_2}$. As in the previous case, we need to compute the density of primes which split completely in composition $L_{H^{-1},F_2} K_{F_1,F_2}$, but not in its degree two extension  $L_{H^{-1},F_2} L_{F_1,F_2}$.
	  Since $\deg{L_{H^{-1},F_2} K_{F_1,F_2}}=32$, in this case the contribution to $C_1$ is equal to $\frac{1}{2}\cdot\frac{1}{64}$. Hence it follows that $C_1=\frac{5}{32}+\frac{1}{256}+\frac{1}{128}=\frac{43}{256}$.
	  
	  To compute the upper bound $C_2$, we need to find the density of the remaining case, $d\in T$ and $\Sha(E^d)[2] = 2\Sha(E^d)[4]$, in which our method does not provide us an answer. If $d>0$, by Proposition \ref{prop:sha24} it is enough to compute the density of primes $p$ which splits completely in $L_{H,H_1}$, $L_{H,H_2}$ and $L_{H_1,H_2}$.  From Table \ref{table:fields}, we see that the composition of these three fields have degree $128$, hence by Chebotarev density theorem the density of primes with this splitting property in $1/128$, hence contribution to $C_2-C_1$ is $1/256 $. 
	  	
	  	If $d<0$ and $d\equiv 7 \pmod{8}$, then $p$ must split completely in $L_{F_1,F_2}, L_{H,F_2}$ and $K=\QQ(\sqrt{-2},\sqrt{13})$ (see Table \ref{table:fields}), and furthermore it must either split completely in $$L=\QQ(\sqrt{-2},\sqrt{13})(\sqrt{4+2\sqrt{13}})$$ or none of its factors in $K$ splits further in $L$ (note that $L/\QQ$ is not Galois extension). One can check that this condition is equivalent for $p$ to split completely in composition $L_{F_1,F_2} L_{H,F_2}$ which is of degree $64$, hence the density of such primes is $1/64$, and contribution to $C_2-C_1$ is equal to $1/128$. Hence $C_2=C_1+1/256+1/128=46/256$.
\end{proof}

\section{Future work}
This paper left us with some interesting questions which may be addressed in the future projects:
\begin{enumerate}
	\item [a)] What information can be obtained about $H^d(\QQ)$ in the case when $\Sha(E^d)[2] = 2\Sha(E^d)[4]$?
	\item [b)] What can one say about $H^d(\QQ)\ne \emptyset$ for some larger class of $d$'s? The main reason why we considered only $d$'s for which $|d|$ is prime is that in this case we can control the $2$-Selmer group of quadratic twists $E^d$ - we have explicit generators. This might also be the case, for example, for the set of $d$'s which are the products of two primes. 
	\item[c)] Can one obtain similar results for the quartics other that $H$? It seems this could be within the reach of this method provided that, as in b), we have explicit description of $2$-Selmer groups of quadratic twists.
\end{enumerate}

\section*{Acknowledgments}
The author was supported by the Croatian Science Foundation under the project no.~IP-2018-01-1313, and by the QuantiXLie Center of Excellence, a project co-financed by the Croatian Government and European Union through the European Regional Development Fund - the Competitiveness and Cohesion Operational Programme (Grant KK.01.1.1.01.0004).

\bibliographystyle{alpha}
\bibliography{bibliography}

\begin{thebibliography}{DKMS17}

\bibitem[BCP97]{Magma}
Wieb Bosma, John Cannon, and Catherine Playoust.
\newblock The {M}agma algebra system. {I}. {T}he user language.
\newblock {\em J. Symbolic Comput.}, 24(3-4):235--265, 1997.
\newblock Computational algebra and number theory (London, 1993).

\bibitem[BD10]{Boxer_Diao}
George Boxer and Peter Diao.
\newblock 2-{S}elmer groups of quadratic twists of elliptic curves.
\newblock {\em Proc. Amer. Math. Soc.}, 138(6):1969--1978, 2010.

\bibitem[cO15]{Ciperiani_Ozman}
Mirela \c{C}iperiani and Ekin Ozman.
\newblock Local to global trace questions and twists of genus one curves.
\newblock {\em Proc. Amer. Math. Soc.}, 143(9):3815--3826, 2015.

\bibitem[DD10]{Dokchitser_Dokchitser}
Tim Dokchitser and Vladimir Dokchitser.
\newblock On the {B}irch-{S}winnerton-{D}yer quotients modulo squares.
\newblock {\em Ann. of Math. (2)}, 172(1):567--596, 2010.

\bibitem[Des20]{Desjardins}
Julie Desjardins.
\newblock Root number of twists of an elliptic curve.
\newblock {\em J. Th\'{e}or. Nombres Bordeaux}, 32(1):73--101, 2020.

\bibitem[DF12]{Dujella_Fuchs}
Andrej Dujella and Clemens Fuchs.
\newblock On a problem of {D}iophantus for rationals.
\newblock {\em J. Number Theory}, 132(10):2075--2083, 2012.

\bibitem[DKMS17]{Dujella_Kazalicki_Mikic_Szikszai}
Andrej Dujella, Matija Kazalicki, Miljen Miki\'{c}, and M\'{a}rton Szikszai.
\newblock There are infinitely many rational {D}iophantine sextuples.
\newblock {\em Int. Math. Res. Not. IMRN}, (2):490--508, 2017.

\bibitem[Dra22]{Drazic}
Goran Dra\v{z}i\'{c}.
\newblock Rational {$D(q)$}-quintuples.
\newblock {\em Rev. R. Acad. Cienc. Exactas F\'{\i}s. Nat. Ser. A Mat. RACSAM},
  116(1):Paper No. 9, 18, 2022.

\bibitem[Duj99]{Dujella_Fib}
Andrej Dujella.
\newblock An extension of an old problem of {D}iophantus and {E}uler.
\newblock {\em Fibonacci Quart.}, 37(4):312--314, 1999.

\bibitem[Duj02]{Dujella_Fib_2}
Andrej Dujella.
\newblock An extension of an old problem of {D}iophantus and {E}uler. {II}.
\newblock {\em Fibonacci Quart.}, 40(2):118--123, 2002.

\bibitem[Duj16]{Dujella_Notices}
Andrej Dujella.
\newblock What is \dots a {D}iophantine {$m$}-tuple?
\newblock {\em Notices Amer. Math. Soc.}, 63(7):772--774, 2016.

\bibitem[Hal98]{Halberstadt}
Emmanuel Halberstadt.
\newblock Signes locaux des courbes elliptiques en 2 et 3.
\newblock {\em C. R. Acad. Sci. Paris S\'{e}r. I Math.}, 326(9):1047--1052,
  1998.

\bibitem[Kra81]{Kramer}
Kenneth Kramer.
\newblock Arithmetic of elliptic curves upon quadratic extension.
\newblock {\em Trans. Amer. Math. Soc.}, 264(1):121--135, 1981.

\bibitem[MR10]{Mazur_Rubin_Hilbert}
B.~Mazur and K.~Rubin.
\newblock Ranks of twists of elliptic curves and {H}ilbert's tenth problem.
\newblock {\em Invent. Math.}, 181(3):541--575, 2010.

\bibitem[Nov22]{Novak_diplomski}
Lukas Novak.
\newblock Lokalna rješivost kvadratnih zavrtaja krivulja genusa 1.
\newblock Diplomski rad, 2022.

\bibitem[Roh93]{Rohrlich93}
David~E. Rohrlich.
\newblock Variation of the root number in families of elliptic curves.
\newblock {\em Compositio Math.}, 87(2):119--151, 1993.

\bibitem[Sil09]{Silverman_Arithmetic}
Joseph~H. Silverman.
\newblock {\em The arithmetic of elliptic curves}, volume 106 of {\em Graduate
  Texts in Mathematics}.
\newblock Springer, Dordrecht, second edition, 2009.

\bibitem[Smi16]{Smith}
Alexander Smith.
\newblock Governing fields and statistics for $4$-{S}elmer groups and $8$-class
  groups.
\newblock arXiv:1607.07860, 2016.

\bibitem[Smi22a]{Smith_22_dist}
Alexander Smith.
\newblock The distribution of $\ell^\infty$-{S}elmer groups in degree $\ell$
  twist families.
\newblock arXiv:2207.05674, 2022.

\bibitem[Smi22b]{Smith_22_fixed}
Alexander Smith.
\newblock The distribution of fixed point {S}elmer groups in twist families.
\newblock arXiv:2207.05143, 2022.

\bibitem[Sto12]{Stoll_descent}
Michael Stoll.
\newblock Descent on elliptic curves.
\newblock In {\em Explicit methods in number theory}, volume~36 of {\em Panor.
  Synth\`eses}, pages 51--80. Soc. Math. France, Paris, 2012.

\end{thebibliography}
\end{document}